\theoremstyle{plain}
\newtheorem{theorem}{Theorem}[section]
\newtheorem{lemma}[theorem]{Lemma}
\theoremstyle{definition}
\newtheorem{definition}[theorem]{Definition}
\newtheorem{remark}[theorem]{Remark}
\newtheorem{counter example}[theorem]{Counter Example}
\newtheorem{notation}[theorem]{Notation}
\numberwithin{equation}{section}
\DeclareMathAlphabet{\mathscr}{OT1}{pzc}{m}{it} 
\begin{document}
\Large{
\title{ORDERED FIELD VALUED CONTINUOUS FUNCTIONS WITH COUNTABLE RANGE}

\author[S.K. Acharyya]{Sudip Kumar Acharyya}
\address{Department of Pure Mathematics, University of Calcutta, 35, Ballygunge Circular Road, Kolkata 700019, West Bengal, India}
\email{sdpacharyya@gmail.com}

\author[A. Deb Ray]{Atasi Deb Ray}
\address{Department of Pure Mathematics, University of Calcutta, 35, Ballygunge Circular Road, Kolkata 700019, West Bengal, India}
\email{debrayatasi@gmail.com}

\author[P. Nandi]{Pratip Nandi}
\address{Department of Pure Mathematics, University of Calcutta, 35, Ballygunge Circular Road, Kolkata 700019, West Bengal, India}
\email{pratipnandi10@gmail.com}

\thanks{The third author thanks the CSIR, New Delhi – 110001, India, for financial support}


\begin{abstract}
	For a Hausdorff zero-dimensional topological space $X$ and a totally ordered field $F$ with interval topology, let $C_c(X,F)$ be the ring of all $F-$valued continuous functions on $X$ with countable range. It is proved that if $F$ is either an uncountable field or countable subfield of $\mathbb{R}$, then the structure space of $C_c(X,F)$ is $\beta_0X$, the Banaschewski Compactification of $X$. The ideals $\{O^{p,F}_c:p\in \beta_0X\}$ in $C_c(X,F)$ are introduced as modified countable analogue of the ideals $\{O^p:p\in\beta X\}$ in $C(X)$. It is realized that $C_c(X,F)\cap C_K(X,F)=\bigcap_{p\in\beta_0X\texttt{\textbackslash}X} O^{p,F}_c$, this may be called a countable analogue of the well-known formula $C_K(X)=\bigcap_{p\in\beta X\texttt{\textbackslash}X}O^p$ in $C(X)$. Furthermore, it is shown that the hypothesis $C_c(X,F)$ is a Von-Neumann regular ring is equivalent to amongst others the condition that $X$ is a $P-$space.
\end{abstract}
\subjclass[2010]{Primary 54C40; Secondary 46E25}
\keywords{totally ordered field, zero-dimensional space, Banaschewski Compactification, $Z^c_F-$ideal, $P-$space, $m^c_F$ tpology}                                                                                                                                                                                                                                                                                                                                                                                                                                                                                                                                                                                                                                                                                                                                                                                                                                                                                                                                                                                                                                                                                                                                                                                                                                                                                                                                                                                                                                                                                                                                                                                                                                                                                                                                                                                                                                                                                                                                                                                                                                                                                                                                                                                                                                                                                                                                                                                                                                                                                                                                                                                                                                                                                                                                                                                                                                                                                                                                                                                                                                                                                                                                                                                                                                                                                                                                                                                                                                                                                                                                                                                                                                                                                                                                                                                                                                                                                                                                                                                                                                                                                                                                                                                                                                                                                                                                                                                                                                                                                                                     
\thanks {}
\maketitle
\section{Introduction}
Let $F$ be a totally ordered field equipped with its ordered topology. For any topological space $X$, suppose $C(X,F)$ is the set of all $F-$valued continuous functions on $X$. This later set becomes a commutative lattice ordered ring with unity, if the operations are defined pointwise on $X$. As in classical scenario with $F=\mathbb{R}$, there is already discovered an interplay existing between the topological structure of $X$ and the algebraic ring and order structure of $C(X,F)$ and a few of its chosen subrings. In order to study this interaction, one can stick to a well-chosen class of spaces viz. the so-called completely $F-$regular topological spaces or in brief $CFR$ spaces. $X$ is called $CFR$ space if it is Hausdorff and points and closed sets in $X$ could be separated by $F-$valued continuous functions in an obvious manner. Problems of this kind are addressed in \cite{IJMS2004}, \cite{AGT2004}, \cite{SABM2004}, \cite{TP2016}, \cite{AGT2015}, \cite{TAMS1975}. It turns out that with $F\neq\mathbb{R}$, $CFR$ spaces are precisely zero-dimensional spaces. Thus zero-dimensionality on $X$ can be realized as a kind of separation axiom effected by $F-$valued continuous functions on $X$. In the present article, we intend to examine the countable analogue of the ring $C(X,F)$ vis-a-vis the corresponding class of spaces $X$. Towards that end, we let $C_c(X,F)=\{f\in C(X,F): f(X)~is~a~countable~subset~of~F\}$. Then $C_c(X,F)$ is a subring as well as a sublattice of $C(X,F)$. It is interesting to note that spaces $X$ in which points and closed sets can be separated by functions in $C_c(X,F)$ are exactly zero-dimensional also [Theorem $2.9$]. Furthermore, the set of all maximal ideals in $C_c(X,F)$ endowed with the well-known Hull-Kernel topology (also known as the structure space of $C_c(X,F)$) turns out to be homeomorphic to the Banaschewski Compactification $\beta_0X$ of $X$ [Theorem $2.18$]. To achieve this result, we have to put certain restriction on the nature of the totally ordered field $F$ viz. that $F$ is either an uncountable field or a countable subfield of $\mathbb{R}$. A special case of this result choosing $F=\mathbb{R}$ reads: the structure space of the ring $C_C(X)$ consisting all real-valued continuous functions on $X$ with countable range is $\beta_0X$, which is Remark $3.6$ in \cite{RM2018}. Since the maximal ideals of $C_c(X,F)$ can be indexed by virtue of the points of $\beta_0X$, it is not surprising that a complete description of these ideals can be given by the family $\{M^{p,F}_c:p\in\beta_0X\}$, where $M^{p,F}_c=\{f\in C_c(X,F):p\in cl_{\beta_0X}Z_c(f)\}$, here $Z_c(f)=\{x\in X:f(x)=0\}$ stands for the zero set of $f$ [Remark $2.21$]. This is analogous to the Gelfand-Kolmogorov Theorem $7.3$ \cite{GJ}. Also, this places Theorem $4.8$ \cite{RS2013} on a wider setting. As a natural companion of $M^{p,F}_c$, we introduce the ideal $O^{p,F}_c=\{f\in M^{p,F}_c:cl_{\beta_0X}Z_c(f)~is~a~neighbourhood~of~p~in~\beta_0X\}$. Amongst other facts connecting these two classes of ideals in $C_c(X,F)$, we have realized that the ideals that lie between $O^{p,F}_c$ and $M^{p,F}_c$ are precisely those that extend to unique maximal ideals in $C_c(X,F)$ [Theorem $3.1(4)$]. This may be called the modified countable counterpart of Theorem $7.13$ in \cite{GJ}. Also see Lemma $4.11$ in \cite{RM2018} in this connection. If $C^c_K(X,F)=\{f\in C_c(X,F):cl_X(X-Z_c(f))~is~compact\}$, then we have found out a formula for this ring in terms of the ideals $O^{p,F}_c$ as follows : $C^c_K(X,F)=\bigcap_{p\in\beta_0X\texttt{\textbackslash}X}O^{p,F}_c$ [in Theorem $3.5$, compare with the Theorem $3.9$, \cite{TP2016}]. This we may call the appropriate modified countable analogue of the well-known formula in $C(X)$ which says that $C_K(X)=\bigcap_{p\in\beta X\texttt{\textbackslash}X}O^p$ [$7E$, \cite{GJ}]. The above-mentioned results constitute technical section $2$ and $3$ of the present article.\\
In the final section $4$ of this article, we have examined several possible consequences of the hypothesis that $C_c(X,F)$ is a Von-Neumann regular ring with $F$, either an uncountable field or a countable subfield of $\mathbb{R}$. To aid to this examination, we introduce $\mathscr{m}^F_c-$topology on $C_c(X,F)$ as a modified version of $\mathscr{m}_c-$topology on $C_c(X)$ already introduced in \cite{QM2020}. We establish amongst a host of necessary and sufficient conditions that $C_c(X,F)$ is a Von-Neumann regular ring if and only if each ideal in $C_c(X,F)$ is closed in the $\mathscr{m}^F_c-$topology if and only if $X$ is a $P-$space. This places theorem $3.9$ in \cite{QM2020} on a wider settings, and we may call it a modified countable analogue of the well-known fact that $X$ is a $P-$space when and only when each ideal in $C(X)$ is closed in the $\mathscr{m}-$topology [$7Q4$, \cite{GJ}].
\section{Duality between ideals in $C_c(X,F)$ and $Z_{F_c}-$filters on $X$}
\begin{notation}
	In spite of the difference of notations, we write for $f\in C_c(X,F)$, $Z_c(f)\equiv\{x\in X: f(x)=0\}\equiv Z(f)$
\end{notation}
Let $Z_c(X,F)=\{Z_c(f):f\in C_c(X,F)\}$.\\
An ideal unmodified in a ring will always stand for a proper ideal.
\begin{definition}
	A filter of zero sets in the family $Z_c(X,F)$ is called a $Z_{F_c}-$filter on $X$. A $Z_{F_c}-$filter on $X$ is called a $Z_{F_c}-$ultrafilter on $X$ if it is not properly contained in any $Z_{F_c}-$filter on $X$.
\end{definition}
\begin{remark}
	A straight forward use of Zorn's Lemma tells that a $Z_{F_c}-$filter on $X$ extends to a $Z_{F_c}-$ultrafilter on $X$. Furthermore any subfamily of $Z_c(X,F)$ with finite intersection property can be extended to a $Z_{F_c}-$ultrafilter on $X$.
\end{remark}
The following results correlating $Z_{F_c}-$filters on $X$ and ideals in $C_c(X,F)$ can be established by using routine arguments.
\begin{theorem}\hspace{-9mm}
	\begin{enumerate}
		\item If $I$ is an ideal in $C_c(X,F)$, then $Z_{F,C}[I]=\{Z_c(f):f\in I\}$ is a $Z_{F_c}-$filter on $X$. Dually for a $Z_{F_c}-$filter $\mathscr{F}$ on $X$, $Z^{-1}_{F,C}[\mathscr{F}]=\{f\in C_c(X,F):Z_c(f)\in \mathscr{F}\}$ is an ideal in $C_c(X,F)$.
		\item If $M$ is a maximal ideal in $C_c(X,F)$, then $Z_{F,C}[M]$ is a $Z_{F_c}-$\\ultrafilter on $X$. If $\mathscr{U}$ is a $Z_{F_c}-$ultrafilter on $X$, then $Z^{-1}_{F,C}[\mathscr{U}]$ is a maximal ideal in $C_c(X,F)$.
	\end{enumerate}
\end{theorem}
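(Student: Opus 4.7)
The plan is to mirror the classical Gillman--Jerison correspondence in $C(X)$ (Theorem 2.3 of \cite{GJ}) almost verbatim, the only two adjustments being to verify that the countable-range condition is preserved under every operation used and that the ordered-field hypothesis supplies the positivity of squares. My proof will rest on three identities: $Z_c(f^2+g^2)=Z_c(f)\cap Z_c(g)$, which holds because $a^2\geq 0$ in any totally ordered field so $f^2+g^2=0$ forces $f=g=0$; $Z_c(fg)=Z_c(f)\cup Z_c(g)$; and $Z_c(f)\cap Z_c(g)\subseteq Z_c(f+g)$. All three of $f^2+g^2$, $fg$, and $f+g$ remain in $C_c(X,F)$ because it is a subring, so the identities may be used freely on both sides of the correspondence.

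For part (1), the first identity gives closure of $Z_{F,C}[I]$ under finite intersection (since $f^2+g^2\in I$); the second gives closure under supersets within $Z_c(X,F)$ (if $Z_c(f)\in Z_{F,C}[I]$ and $Z_c(f)\subseteq Z_c(g)$, then $Z_c(g)=Z_c(fg)\in Z_{F,C}[I]$); and the third gives additive closure of the pullback $Z^{-1}_{F,C}[\mathscr{F}]$, with absorption coming from $Z_c(f)\subseteq Z_c(fg)$. The remaining clause in each direction is that $\emptyset$ is not a member, which reduces to the following unit lemma: if $f\in C_c(X,F)$ has $Z_c(f)=\emptyset$ then $f$ is a unit. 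This is the only point where countability of range is actually tested, and it is immediate, since $f(X)\subseteq F\setminus\{0\}$ countable implies $\{1/y:y\in f(X)\}$ is countable, so $1/f\in C_c(X,F)$.

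For part (2), I will use the standard Galois-type identities $Z_{F,C}[Z^{-1}_{F,C}[\mathscr{F}]]=\mathscr{F}$ for any $Z_{F_c}$-filter $\mathscr{F}$ and $Z^{-1}_{F,C}[Z_{F,C}[I]]\supseteq I$ for any ideal $I$, both of which are immediate from the definitions. Given a maximal ideal $M$, any $Z_{F_c}$-filter $\mathscr{F}\supseteq Z_{F,C}[M]$ pulls back to a proper ideal containing $M$, which by maximality must equal $M$; pushing forward gives $\mathscr{F}=Z_{F,C}[M]$, proving ultrafilter. Conversely, if $\mathscr{U}$ is a $Z_{F_c}$-ultrafilter, any proper ideal $J\supsetneq Z^{-1}_{F,C}[\mathscr{U}]$ yields $Z_{F,C}[J]\supseteq\mathscr{U}$, hence equality by ultraness; then every $f\in J$ has $Z_c(f)\in\mathscr{U}$, forcing $J\subseteq Z^{-1}_{F,C}[\mathscr{U}]$, a contradiction.

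I do not foresee a genuine obstacle. The only two places the new setting intervenes are the positivity of squares (handled by the ordered-field hypothesis) and the invertibility of non-vanishing elements inside $C_c(X,F)$ (handled by the trivial observation that countable range is preserved under reciprocation). Once these ingredients are in place, the authors' description of the argument as ``routine'' seems fully warranted.
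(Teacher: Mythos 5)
Your proposal is correct and follows exactly the route the paper intends: the paper gives no written proof, stating only that the result follows by ``routine arguments'' (i.e.\ the classical correspondence of \cite{GJ}, Chapter 2), and your three zero-set identities, the unit lemma for non-vanishing functions of countable range, and the Galois-type identities are precisely that routine argument, with the two genuinely new points (positivity of squares in a totally ordered field, and preservation of countable range under reciprocation) correctly isolated and handled.
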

\begin{definition}
	An ideal $I$ in $C_c(X,F)$ is called $Z_{F_c}-$ideal if $Z^{-1}_{F,C}[Z_{F,C}[I]]=I$
\end{definition}
It follows from Theorem $2.4(2)$ that each maximal ideal in $C_c(X,F)$ is a $Z_{F_c}-$ideal. Hence the assignment : $M\to Z_{F,C}[M]$ establish a one-to-one correspondence between the maximal ideals in $C_c(X,F)$ and the $Z_{F_c}-$ultrafilters on $X$.\\
The following propositions can be easily established on using the arguments adopted in Chapter $2$ and Chapter $4$ of \cite{GJ} in a straight forward manner.
\begin{theorem}
	A $Z_{F_c}-$ideal $I$ in $C_c(X,F)$ is a prime ideal if and only if it contains a prime ideal. Hence each prime ideal in $C_c(X,F)$ extends to a unique maximal ideal, in other words, $C_c(X,F)$ is a Gelfand ring.
\end{theorem}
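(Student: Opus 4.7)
The plan is to adapt the classical lattice argument from Chapter~2 of Gillman--Jerison, ensuring that every auxiliary function manufactured stays in $C_c(X,F)$ and that the sum-of-squares identity still detects zero sets in the ordered field $F$. One direction is vacuous: a prime ideal contains itself. For the substantive direction, suppose $I$ is a $Z_{F_c}$-ideal containing a prime ideal $P$, and let $fg\in I$. Define $f_1=(|f|-|g|)\vee 0$ and $g_1=(|g|-|f|)\vee 0$. These lie in $C_c(X,F)$ because the ring is a sublattice of $C(X,F)$ and the ranges of $f_1,g_1$ sit inside countable subsets of $F$ obtained by arithmetic from the countable ranges of $f$ and $g$.

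By construction $f_1g_1=0\in P$, so primality of $P$ yields $f_1\in P\subseteq I$ or $g_1\in P\subseteq I$; without loss of generality, $f_1\in I$. Now put $h=f_1^{2}+(fg)^{2}\in I$. Because $F$ is totally ordered, squares are nonnegative and a sum of nonnegatives vanishes only when each summand does, so $Z_c(h)=Z_c(f_1)\cap Z_c(fg)$. Writing $Z_c(f_1)=\{x\in X:|f(x)|\le |g(x)|\}$, a short case analysis gives $Z_c(h)=Z_c(f)$: any $x\in Z_c(f)$ clearly lies in this intersection, and conversely $|f(x)|\le|g(x)|$ together with $g(x)=0$ forces $f(x)=0$. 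Thus $Z_c(f)=Z_c(h)\in Z_{F,C}[I]$, and since $I$ is a $Z_{F_c}$-ideal we conclude $f\in I$. Hence $I$ is prime.

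For the Gelfand conclusion, every prime $Q$ extends to some maximal ideal by Zorn's Lemma; the task is uniqueness. Suppose $M_1\ne M_2$ were both maximal and contained $Q$. By Theorem~2.4(2) each $M_i$ is a $Z_{F_c}$-ideal, and the intersection of two $Z_{F_c}$-ideals is routinely seen to be a $Z_{F_c}$-ideal. So $M_1\cap M_2$ is a $Z_{F_c}$-ideal containing the prime $Q$, hence is itself prime by the first half just established. But distinct maximal ideals are incomparable, so one can pick $f\in M_1\setminus M_2$ and $g\in M_2\setminus M_1$; then $fg\in M_1\cap M_2$ although neither factor lies in the intersection, a contradiction.

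The one step that might plausibly fail in passing from $\mathbb{R}$ to a general ordered field $F$ is the interplay of countability of ranges with the sum-of-squares identity. Both issues are benign: countability is preserved under finitely many arithmetic operations in $F$, and in any totally ordered field, nonnegative elements sum to zero only when each summand vanishes. No uncountability or subfield hypothesis on $F$ is invoked, so the proof is uniform over all totally ordered fields considered in the paper.
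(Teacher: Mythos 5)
Your proof is correct and follows essentially the route the paper itself prescribes: the paper proves this theorem only by deferring to ``the arguments adopted in Chapter 2 \dots of \cite{GJ}'', and your argument is a faithful adaptation of exactly those lattice-theoretic arguments (the functions $(|f|-|g|)\vee 0$, the sum-of-squares detection of zero sets, and the incomparability of distinct maximal $Z_{F_c}$-ideals), with the relevant checks that everything stays in $C_c(X,F)$ and works over an arbitrary totally ordered field. Nothing further is needed.
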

\begin{theorem}
	The complete list of fixed maximal ideals in $C_c(X,F)$ is given by $\{M^c_{p,F}:p\in X\}$ where $M^c_{p,F}=\{f\in C_c(X,F):f(p)=0\}$. An ideal $I$ in $C_c(X,F)$ is called fixed if $\bigcap_{f\in I}Z(f)\neq\phi$.
\end{theorem}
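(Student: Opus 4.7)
The result decomposes naturally into two directions: showing each $M^c_{p,F}$ is a fixed maximal ideal, and showing that every fixed maximal ideal has this form. For the forward direction, my plan is to exhibit $M^c_{p,F}$ as the kernel of the evaluation homomorphism $e_p: C_c(X,F) \to F$ given by $e_p(f) = f(p)$. This is a ring homomorphism (operations in $C_c(X,F)$ are defined pointwise), and it is surjective because for every $\alpha \in F$ the constant function with value $\alpha$ has singleton range and is trivially continuous, hence lies in $C_c(X,F)$. Consequently $C_c(X,F)/M^c_{p,F} \cong F$ is a field, so $M^c_{p,F}$ is a maximal ideal; and since $p \in \bigcap_{f \in M^c_{p,F}} Z(f)$ trivially, the ideal is fixed.

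For the converse direction, let $M$ be an arbitrary fixed maximal ideal of $C_c(X,F)$. By the definition of \emph{fixed}, there exists a point $p$ with $p \in \bigcap_{f \in M} Z(f)$, which means $f(p) = 0$ for every $f \in M$, giving the containment $M \subseteq M^c_{p,F}$. Because $M^c_{p,F}$ is a proper ideal (the constant function $\mathbf{1}$ does not lie in the kernel of $e_p$) and $M$ is maximal, equality $M = M^c_{p,F}$ follows immediately.

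I anticipate no substantial obstacle: the argument mirrors the classical proof for fixed maximal ideals in $C(X)$ from Chapter $4$ of Gillman--Jerison, with the only cosmetic adjustment being the verification that $F$ embeds in $C_c(X,F)$ via constants, which is automatic since singleton ranges are countable. Notably, neither the zero-dimensionality of $X$ nor the restrictions to be imposed later on $F$ (uncountable, or a countable subfield of $\mathbb{R}$) are needed at this stage; they will only become relevant when analysing the free maximal ideals and the structure space $\beta_0 X$.
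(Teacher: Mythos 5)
Your proof is correct and is exactly the classical Gillman--Jerison argument that the paper itself invokes: the paper offers no written proof of this theorem, merely noting that it "can be easily established on using the arguments adopted in Chapter 2 and Chapter 4 of \cite{GJ}," which is precisely the evaluation-homomorphism and maximality argument you give. Your closing observation that neither zero-dimensionality nor the later restrictions on $F$ are needed here is also accurate.
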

\begin{definition}
	$X$ is called countably completely $F-$regular or in brief $CCFR$ space if it is Hausdorff and given a closed set $K$ in $X$ and a point $x\in X\texttt{\textbackslash}K$, there exists $f\in C_c(X,F)$ such that $f(x)=0$ and $f(K)=1$.
\end{definition}
It is clear that a $CCFR$ space is $CFR$.\\
A $CFR$ space with $F\neq\mathbb{R}$ is zero-dimensional by Theorem $2.3$ in \cite{TP2016}. A $CCFR$ space with $F=\mathbb{R}$ is the same as $C-$completely regular space introduced in \cite{RS2013} and is hence zero-dimensional space by Proposition $4.4$ in \cite{RS2013}. Thus for all choices of the field $F$, a $CCFR$ space is zero-dimensional. Conversely, it is easy to prove that a zero-dimensional space $X$ is $CCFR$ for any totally ordered field $F$. Thus, the following result comes out immediately.
\begin{theorem}
	The statements written below are equivalent for a Hausdorff space $X$ and for any totally ordered field $F$ : 
	\begin{enumerate}
		\item $X$ is zero-dimensional.
		\item $X$ is $CCFR$.
		\item $Z_c(X,F)$ is a base for closed sets in $X$.
	\end{enumerate}
\end{theorem}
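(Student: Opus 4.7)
The plan is to verify $(1)\Rightarrow(2)$ directly, establish the equivalence $(2)\iff(3)$ by a short algebraic translation, and invoke the paragraph preceding the theorem for the only non-routine implication $(2)\Rightarrow(1)$. For $(1)\Rightarrow(2)$, I would argue: in a zero-dimensional Hausdorff space $X$, given a closed set $K$ and a point $x\notin K$, there is a clopen set $U$ with $x\in U\subseteq X\setminus K$. The indicator function $f=\chi_{X\setminus U}:X\to\{0_F,1_F\}\subseteq F$ is continuous because both $U$ and $X\setminus U$ are open and closed; it has finite (hence countable) range, so it lies in $C_c(X,F)$, and by construction $f(x)=0$ and $f(K)=1$, realising the CCFR separation.

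For $(2)\iff(3)$ I would use the standard dictionary between separating functions and separating zero-sets. Assuming $(2)$, given closed $K$ and $x\notin K$, take $f\in C_c(X,F)$ with $f(x)=0$ and $f(K)=1$, and set $g:=1-f\in C_c(X,F)$. Then $K\subseteq Z_c(g)$ and $x\notin Z_c(g)$, which shows that every closed set is the intersection of the members of $Z_c(X,F)$ containing it, giving $(3)$. Conversely, assuming $(3)$, use it to obtain $g\in C_c(X,F)$ with $K\subseteq Z_c(g)$ and $x\notin Z_c(g)$. Setting $a:=g(x)\in F\setminus\{0\}$, the function $f:=1-a^{-1}g$ lies in $C_c(X,F)$ (since $F$ is a field and $C_c(X,F)$ is closed under multiplication by elements of $F$ and contains the constants) and satisfies $f(x)=0$ and $f(K)=1$, recovering $(2)$.

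The only non-trivial step is $(2)\Rightarrow(1)$, and this has already been supplied in the discussion immediately preceding the theorem: the inclusion $C_c(X,F)\subseteq C(X,F)$ gives at once that $CCFR\Rightarrow CFR$; Theorem $2.3$ of \cite{TP2016} then delivers zero-dimensionality when $F\neq\mathbb{R}$, while Proposition $4.4$ of \cite{RS2013} covers the case $F=\mathbb{R}$ by identifying $CCFR$ with $C$-completely regular. Apart from quoting these two external results as a black box, the proof is purely mechanical, so I do not anticipate any genuine obstacle; the only cosmetic point to remember is the invertibility of $a$ in $F$, which legitimises the rescaling $a^{-1}g$ in the last step.
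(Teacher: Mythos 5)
Your proposal is correct and follows essentially the same route as the paper: the implication $(2)\Rightarrow(1)$ is delegated to Theorem $2.3$ of the cited Topology Proceedings paper (for $F\neq\mathbb{R}$) and Proposition $4.4$ of the cited Rend.\ Sem.\ Mat.\ paper (for $F=\mathbb{R}$), exactly as in the discussion preceding the theorem, while the remaining implications are the routine clopen-indicator and zero-set-dictionary arguments that the paper declares ``easy'' and ``immediate.'' Your write-up merely makes explicit the details (the indicator function for $(1)\Rightarrow(2)$ and the rescaling $f=1-a^{-1}g$ for $(3)\Rightarrow(2)$) that the authors leave to the reader.
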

The following result tells that as in the classical situation with $F=\mathbb{R}$, in the study of the ring $C_c(X,F)$, one can assume without loss of generality that the ambient space $X$ is $CCFR$, i.e., zero-dimensional.
\begin{theorem}
	Let $X$ be a topological space and $F$, a totally ordered field. Then it is possible to construct a zero-dimensional Hausdorff space $Y$ such that the ring $C_c(X,F)$ is isomorphic to the ring $C_c(Y,F)$
\end{theorem}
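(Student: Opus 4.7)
The plan is to adapt the classical quotient-by-indistinguishability construction (as in Theorem~3.9 of Gillman--Jerison \cite{GJ}) to the countable-range setting, choosing the topology on the quotient so that zero-dimensionality falls out of Theorem~2.9. I would begin by defining the equivalence relation $x \sim y$ on $X$ to mean $f(x) = f(y)$ for every $f \in C_c(X,F)$, setting $Y := X/\sim$ with quotient map $q$, and noting that each $f$, being constant on $\sim$-classes, descends to a well-defined $\tilde f : Y \to F$ satisfying $f = \tilde f \circ q$. I would topologize $Y$ by declaring $\mathcal{B} := \{q(Z_c(f)) : f \in C_c(X,F)\}$ to be a base for the closed sets; this is a legitimate closed base since $q(Z_c(f)) \cup q(Z_c(g)) = q(Z_c(fg))$ with $fg \in C_c(X,F)$, while $\emptyset = q(Z_c(1))$ and $Y = q(Z_c(0))$.

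With this topology in hand, I would verify $\tilde f \in C_c(Y,F)$ by writing an arbitrary closed set $C \subseteq F$ as an intersection of sets of the form $(-\infty,a] \cup [b,\infty)$ and showing, using the lattice structure of $C_c(X,F)$, that $\tilde f^{-1}((-\infty,a] \cup [b,\infty)) = q(Z_c((f-a)^{+}(b-f)^{+})) \in \mathcal{B}$; consequently $\tilde f^{-1}(C)$ is closed in $Y$ and $\tilde f$ is continuous, with countable range $f(X)$. Hausdorffness of $Y$ then follows because any two distinct points $y_1, y_2$ are separated by some $\tilde f$ into distinct points of the Hausdorff field $F$, and disjoint neighbourhoods there pull back under the continuous $\tilde f$ to disjoint open neighbourhoods in $Y$. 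Zero-dimensionality of $Y$ is automatic: the closed base $\mathcal{B}$ is exactly $\{Z_c(\tilde f) : f \in C_c(X,F)\} \subseteq Z_c(Y,F)$, so Theorem~2.9 delivers the conclusion.

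Finally, I would establish the ring isomorphism $\Phi : C_c(Y,F) \to C_c(X,F)$ given by $\Phi(g) = g \circ q$: continuity of $q$ follows from the identity $q^{-1}(q(Z_c(f))) = Z_c(f)$ (valid because $Z_c(f)$ is $\sim$-saturated), so $\Phi$ is well-defined; it is manifestly a ring homomorphism, injective since $q$ is surjective, and surjective since $\Phi(\tilde f) = f$ for every $f \in C_c(X,F)$. The principal technical hurdle will be the continuity argument for $\tilde f$: one must confirm that $C_c(X,F)$ is closed under the positive and negative parts (immediate, as $F$ is totally ordered and these operations leave the range countable) and then translate the order topology of $F$ into zero-set language via these lattice operations together with the identity $Z_c(gh) = Z_c(g) \cup Z_c(h)$. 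The remaining items -- the closed-base axioms, $\sim$-saturation of zero-sets, and Hausdorff separation -- should be routine verifications.
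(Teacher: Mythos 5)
Your proposal is correct, and the underlying construction is the same as the paper's: quotient $X$ by the relation identifying points that no function in $C_c(X,F)$ separates, descend each $f$ to $\tilde f$ on $Y$, and show $g\mapsto g\circ q$ is an isomorphism (this is the adaptation of Theorem 3.9 of \cite{GJ} that the paper also carries out). Where you genuinely diverge is in how the topology on $Y$ is set up and certified: the paper equips $Y$ with the initial topology determined by the family $\mathscr{S}=\{g_f\}$ and then outsources zero-dimensionality and the continuity of the quotient map to its two preparatory lemmas (Lemma 2.11 and Lemma 2.12), whereas you declare $\{q(Z_c(f))\}$ to be a closed base, check the base axioms by hand via $Z_c(fg)=Z_c(f)\cup Z_c(g)$, obtain continuity of $\tilde f$ by expressing complements of basic open intervals as zero sets of $(f-a)^{+}(b-f)^{+}$, and read off zero-dimensionality directly from Theorem 2.9(3). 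The two topologies coincide (your basic closed sets are weak-closed, and every subbasic weak-closed set $\tilde f^{-1}(C)$ is an intersection of your basic closed sets), so nothing is lost; what your route buys is self-containedness --- you never need Lemmas 2.11 and 2.12, and the saturation identity $q^{-1}(q(Z_c(f)))=Z_c(f)$ replaces Lemma 2.12 in proving $q$ continuous --- at the cost of the explicit lattice computations, which, as you note, are unproblematic since positive parts preserve countability of range.
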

We need the following two subsidiary results to prove this theorem. 
\begin{lemma}
	A Hausdorff space $X$ is zero-dimensional if and only if given any ordered field $F$, there exists a subfamily $\mathscr{S}\subset F^X_c=\{f\in F^X:f(X)~is~countable~set\}$, which determines the topology on $X$ in the sense that, the given topology on $X$ is the smallest one with respect to which each function in $\mathscr{S}$ is continuous.
\end{lemma}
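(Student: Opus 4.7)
The plan is to prove the two implications separately. For the forward direction, assume $X$ is Hausdorff and zero-dimensional, so its topology has a base of clopen sets. I take $\mathscr{S}=\{\chi_U^F : U \text{ clopen in } X\}$, where $\chi_U^F\colon X\to F$ sends $U$ to $1\in F$ and $X\setminus U$ to $0\in F$. Each $\chi_U^F$ has range $\{0,1\}$, so $\mathscr{S}\subseteq F^X_c$; moreover the preimage under $\chi_U^F$ of any open subset of $F$ is one of $\emptyset, U, X\setminus U, X$, hence open, so each $\chi_U^F$ is continuous in the given topology. Because $F$ has characteristic zero, $\tfrac{1}{2}$ and $\tfrac{3}{2}$ lie in $F$, and $(\chi_U^F)^{-1}\bigl((\tfrac{1}{2},\tfrac{3}{2})\bigr)=U$. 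Consequently every clopen $U$ lies in the initial topology of $\mathscr{S}$, and since the clopen sets form a base for $X$, this initial topology coincides with the given topology.

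For the converse I specialise the universally quantified hypothesis to $F=\mathbb{R}$, obtaining $\mathscr{S}\subseteq\mathbb{R}^X_c$ that generates the topology of $X$. Given $x\in X$ and an open neighborhood $V$ of $x$, the generating property supplies $f_1,\ldots,f_n\in\mathscr{S}$ and open sets $U_1,\ldots,U_n\subseteq\mathbb{R}$ with $x\in\bigcap_{i=1}^{n}f_i^{-1}(U_i)\subseteq V$. My aim is to replace each $U_i$ by a smaller open interval whose $f_i$-preimage is clopen in $X$; a finite intersection of such preimages will then be a clopen neighborhood of $x$ contained in $V$, and together with the assumed Hausdorffness this establishes zero-dimensionality.

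The step I expect to be the main obstacle is converting $f_i^{-1}(U_i)$ into a clopen set. Here the countability of $f_i(X)$ pays off: since $\mathbb{R}$ is uncountable and $f_i(X)$ is countable, $\mathbb{R}\setminus f_i(X)$ is dense in $\mathbb{R}$, so I can choose $a_i,b_i\in\mathbb{R}\setminus f_i(X)$ with $f_i(x)\in(a_i,b_i)\subseteq U_i$. Since $a_i$ and $b_i$ are not attained by $f_i$, one has $f_i^{-1}\bigl((a_i,b_i)\bigr)=f_i^{-1}\bigl([a_i,b_i]\bigr)$, and this common set is both open and closed by continuity of $f_i$. Intersecting over $i$ completes the argument. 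If one wishes to avoid the shortcut $F=\mathbb{R}$ and work with an arbitrary ordered field, the analogous step would instead appeal to the fact that a countable subspace of $F$ (with the induced interval topology) is regular and second countable, hence metrizable by Urysohn, and countable metrizable spaces are zero-dimensional; one then pulls back a clopen-in-$f_i(X)$ neighborhood of $f_i(x)$ contained in $U_i\cap f_i(X)$ through the continuous $f_i$ to play the same role.
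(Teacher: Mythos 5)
Your proof is correct and matches the argument the paper intends (the paper only cites Theorem 3.7 of Gillman--Jerison together with its Theorem 2.9): characteristic functions of clopen sets generate the topology in the forward direction, and the gap trick --- choosing interval endpoints outside the countable range so that preimages of open intervals are clopen --- handles the converse, exactly the device used in the paper's Theorems 2.17 and 2.20, while your specialization of the universally quantified hypothesis to $F=\mathbb{R}$ is logically legitimate. One caveat on your optional closing remark for general $F$: second countability of a countable subspace of $F$ is not automatic, so Urysohn metrization is not directly available as stated; the safe route is that a countable regular (hence Lindel\"{o}f, hence completely regular) space is zero-dimensional, again by picking a value outside the countable range of a separating $[0,1]$-valued function, which is what the paper's Theorem 2.9 encapsulates.
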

The proof of this lemma can be accomplished by closely following the arguments in Theorem $3.7$ in \cite{GJ} and using Theorem $2.9$.\\
\begin{lemma}
	Suppose $X$ is a topological space whose topology is determined by a subfamily $\mathscr{S}$ of $F^X_c$. Then for a topological space $Y$, a function $h:Y\to X$ is continuous if and only if for each $g\in\mathscr{S}$, $g\circ h:Y\to F$ is a continuous map.
\end{lemma}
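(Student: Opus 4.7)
The plan is to recognize this as the standard universal property of initial topologies, adapted to the present setting where the generating family lies in $F^X_c$. The hypothesis that $\mathscr{S}$ determines the topology on $X$ means precisely that the given topology is the initial (weak) topology induced by $\mathscr{S}$, so the sets $g^{-1}(V)$ with $g \in \mathscr{S}$ and $V$ open in $F$ form a subbase for the topology of $X$.

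For the forward implication I would simply observe that each $g \in \mathscr{S}$ is continuous on $X$ by construction, so whenever $h : Y \to X$ is continuous, the composition $g \circ h$ is continuous as a composition of two continuous maps.

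For the converse, assume that $g \circ h : Y \to F$ is continuous for every $g \in \mathscr{S}$. To verify continuity of $h$ it suffices to check that $h^{-1}(W)$ is open in $Y$ for each $W$ in a subbase of the topology of $X$. Take such a subbasic open set $W = g^{-1}(V)$ with $g \in \mathscr{S}$ and $V$ open in $F$. Then
\[
h^{-1}(W) \;=\; h^{-1}\bigl(g^{-1}(V)\bigr) \;=\; (g \circ h)^{-1}(V),
\]
which is open in $Y$ by the standing hypothesis. Preimages commute with unions and finite intersections, so openness of the preimages of a subbase propagates to all open sets of $X$, and thus $h$ is continuous.

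There is no real obstacle here: the lemma is essentially the universal property of initial topologies, and the only substantive input is the identification of $\{g^{-1}(V) : g \in \mathscr{S},\; V \text{ open in } F\}$ as a subbase, which is exactly the content of the phrase \emph{determines the topology} as used in Lemma 2.10. The countability condition $\mathscr{S} \subset F^X_c$ plays no role in this particular argument; it enters only when Lemma 2.10 is invoked to guarantee the existence of such an $\mathscr{S}$ on a zero-dimensional $X$.
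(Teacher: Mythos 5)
Your proof is correct and is exactly the standard argument the paper relies on: the paper gives no proof of its own but defers to Theorem 3.8 of Gillman--Jerison, whose proof is precisely this universal-property-of-the-initial-topology argument (forward direction by composition, converse by checking preimages of the subbasic sets $g^{-1}(V)$). Your closing remark that the countability restriction $\mathscr{S}\subset F^X_c$ is irrelevant here is also accurate.
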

The proof of the last lemma is analogous to that of Theorem $3.8$ in \cite{GJ}.
\begin{proof}
 of the main theorem :  Define a binary relation $\lq{\sim}$' on $X$ as follows : for $x,y\in X$, $x\sim y$ if and only if for each $f\in C_c(X,F),~f(x)=f(y)$.\\
Suppose $Y=\{[x]:x\in X\}$, the set of all corresponding disjoint classes. Let $\tau:X\to Y$ be the canonical map given by $\tau(x)=[x]$. Each $f\in C_c(X,F)$ gives rise to a function $g_f:Y\to F$ as follows : $g_f[x]=f(x)$.\\
Let $\mathscr{S}=\{g_f:f\in C_c(X,F)\}$. Then $\mathscr{S}\subset F^Y_c$. Equip $Y$ with the smallest topology, which makes each function in $\mathscr{S}$ continuous. It follows from the Lemma $2.11$, that $Y$ is a zero-dimensional space and it is easy to check that $Y$ is Hausdorff. The continuity of $\tau$ follows from Lemma $2.12$. Now by the following arguments in Theorem $3.9$ in \cite{GJ}, we can prove that the assignment : $C_c(Y,F)\to C_c(X,F)$ : $g\to g\circ\tau$ is an isomorphism onto $C_c(X,F)$.
\end{proof}
The following result is a countable counterpart of a portion of Theorem $4.11$ in \cite{GJ}.
\begin{theorem}
	For a zero-dimensional Hausdorff space $X$ and a totally ordered field $F$, the following three statements are equivalent : 
	\begin{enumerate}
		\item $X$ is compact.
		\item Each ideal in $C_c(X,F)$ is fixed.
		\item Each maximal ideal in $C_c(X,F)$ is fixed.
	\end{enumerate}
\end{theorem}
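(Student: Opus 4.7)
The plan is to prove the cyclic implications $(1)\Rightarrow(2)\Rightarrow(3)\Rightarrow(1)$, with the first and last steps being the substantive ones and the middle being trivial.

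For $(1)\Rightarrow(2)$, I would take any ideal $I$ in $C_c(X,F)$. By Theorem $2.4(1)$, $Z_{F,C}[I]=\{Z_c(f):f\in I\}$ is a $Z_{F_c}$-filter on $X$, hence a family of (closed) zero sets with the finite intersection property. Compactness of $X$ then forces $\bigcap_{f\in I} Z_c(f)\neq\emptyset$, so $I$ is fixed. The implication $(2)\Rightarrow(3)$ is immediate since maximal ideals are ideals.

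The main work is in $(3)\Rightarrow(1)$. Here I would argue via the closed-set formulation of compactness. Let $\mathscr{C}$ be an arbitrary family of closed subsets of $X$ with the finite intersection property; the goal is to show $\bigcap\mathscr{C}\neq\emptyset$. Since $X$ is zero-dimensional and Hausdorff, Theorem $2.9$ guarantees that $Z_c(X,F)$ is a base for the closed sets of $X$. Thus each $C\in\mathscr{C}$ can be written as $C=\bigcap_{\lambda\in\Lambda_C}Z_{c}(f_{C,\lambda})$ for some collection of members of $C_c(X,F)$. The enlarged family $\mathscr{Z}=\{Z_c(f_{C,\lambda}):C\in\mathscr{C},\lambda\in\Lambda_C\}$ then also has the finite intersection property, because any finite sub-intersection from $\mathscr{Z}$ contains a finite sub-intersection from $\mathscr{C}$, which is nonempty by hypothesis.

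By Remark $2.3$, $\mathscr{Z}$ extends to a $Z_{F_c}$-ultrafilter $\mathscr{U}$ on $X$, and by Theorem $2.4(2)$ the set $M=Z^{-1}_{F,C}[\mathscr{U}]$ is a maximal ideal of $C_c(X,F)$. Hypothesis $(3)$ yields a point $p\in\bigcap_{f\in M}Z_c(f)=\bigcap\mathscr{U}$, and since $\mathscr{Z}\subseteq\mathscr{U}$, this $p$ lies in every $Z_c(f_{C,\lambda})$, and therefore in every $C\in\mathscr{C}$. Hence $\bigcap\mathscr{C}\neq\emptyset$ and $X$ is compact. The subtle step that requires care is the passage from the filter of closed sets $\mathscr{C}$ to a $Z_{F_c}$-filter, which is exactly where zero-dimensionality (via Theorem $2.9$) is used in an essential way; without a zero-set base one could not funnel the problem into the ideal/ultrafilter correspondence.
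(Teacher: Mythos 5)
Your proposal is correct and follows essentially the same route as the paper: reduce compactness to the finite intersection property for families of zero sets (using Theorem 2.9 that $Z_c(X,F)$ is a base for closed sets), extend such a family to a $Z_{F_c}$-ultrafilter, and invoke the correspondence with maximal ideals together with hypothesis (3). The only difference is that you spell out the reduction from arbitrary closed sets to basic closed sets, which the paper compresses into the phrase ``it suffices to show.''
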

\begin{proof}
	 $(1)\implies(2)$ and $(2)\implies(3)$ are trivial. We prove $(3)\implies(1)$ : Let $(3)$ be true.\\
	Suppose $\mathscr{B}$ is a subfamily of $Z_c(X,F)$ with finite intersection property. Since $Z_c(X,F)$ is a base for the closed sets in $X$(vide Theorem $2.9$), it suffices to show that $\bigcap\mathscr{B}\neq\phi$.\\
	Indeed $\mathscr{B}$ can be extended to a $Z_{F_c}-$ultrafilter $\mathscr{U}$ on $X$. In view of Theorem $2.4$, we can write $ \mathscr{U}=Z_{F,C}[M]$ for a maximal ideal $M$ in $C_c(X,F)$. Hence $\bigcap\mathscr{B}\supset \bigcap\mathscr{U}\neq\phi$.
\end{proof}
Before proceeding further, we reproduce below the following basic facts about the structure space of a commutative ring with unity from $7M$, \cite{GJ}.\\
Let $A$ be a  commutative ring with unity and $\mathcal{M}(A)$, the set of all maximal ideals in $A$. For each $a\in A$, let $\mathcal{M}_a=\{M\in\mathcal{M}(A):a\in M\}$. Then the family $\{\mathcal{M}_a:a\in A\}$ constitutes a base for the closed sets of some topology $\tau$ on $\mathcal{M}(A)$. The topological space $(\mathcal{M}(A),\tau)$ is known as the structure space of $A$ and is a compact $T_1$ space. If $A$ is a Gelfand ring, then it is established in Theorem $1.2$, \cite{AMS1971} that $\tau$ is a Hausdorff topology on $\mathcal{M}(A)$. The closure of a subset $\mathcal{M}_0$ of $\mathcal{M}(A)$ is given by : $\overline{\mathcal{M}_0}=\{M\in\mathcal{M}(A):M\supset\bigcap\mathcal{M}_0\}\equiv$ the hull of the kernel of $\mathcal{M}_0$. [This is the reason why $\tau$ is also called the hull-kernel topology on $\mathcal{M}(A)$].\\
Let us denote the structure space of the ring $C_c(X,F)$ by the notation $\mathcal{M}_c(X,F)$. Since $C_c(X,F)$ is a Gelfand ring, already verified in Theorem $2.6$, it follows that $\mathcal{M}_c(X,F)$ is a compact Hausdorff space. From now on, we assume that $X$ is Hausdorff and zero-dimensional, and we will stick to this hypothesis throughout this article. It follows that the assignment $\psi:X\to \mathcal{M}_c(X,F)$ given by $\psi(p)=M^c_{p,F}$ is one-to-one. Furthermore for any $f\in C_c(X,F)$, 
$$\psi(Z_c(f))=\{M^c_{p,F}:f\in M^c_{p,F}\}=\mathcal{M}_f\cap \psi(X)$$
where $\mathcal{M}_f=\{M\in\mathcal{M}_c(X,F): f\in M\}$.\\
This shows that $\psi$ exchanges the basic closed sets of the two spaces $X$ and $\psi(X)$. Finally, 
\begin{align*}
\overline{\psi(X)}&=\{M\in\mathcal{M}_c(X,F):M\supset\bigcap\psi(X)\}\\
&=\{M\in\mathcal{M}_c(X,F):M\supset\bigcap_{p\in X}\{M^c_{p,F}\}=\{0\}\}\\
&=\mathcal{M}_c(X,F)
\end{align*}
This leads to the following proposition : 
\begin{theorem}
	The map $\psi:X\to \mathcal{M}_c(X,F)$ given by $\psi(p)=M^c_{p,F}$ defines a topological embedding of $X$ onto a dense subspace of $\mathcal{M}_c(X,F)$. In a more formal language, the pair $(\psi,\mathcal{M}_c(X,F))$ is a Hausdorff Compactification of $X$.
\end{theorem}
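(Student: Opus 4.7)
The plan is to assemble the four ingredients --- injectivity, continuity, openness onto the image, and density --- from the computations already set up immediately before the statement, adding only a small amount of topological verification.

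First I would establish injectivity of $\psi$. If $p \neq q$ in the Hausdorff zero-dimensional space $X$, then by Theorem $2.9$ the space is $CCFR$, so there exists $f \in C_c(X,F)$ with $f(p)=0$ and $f(q)=1$. Hence $f$ lies in $M^c_{p,F}$ but not in $M^c_{q,F}$, so $\psi(p) \neq \psi(q)$.

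Next I would show that $\psi$ is a homeomorphism onto the subspace $\psi(X)$ by invoking the identity $\psi(Z_c(f))=\mathcal{M}_f\cap\psi(X)$, derived in the preceding paragraph. Since $\{Z_c(f):f\in C_c(X,F)\}$ is a base for the closed sets of $X$ (Theorem $2.9$) and $\{\mathcal{M}_f:f\in C_c(X,F)\}$ is, by the $7M$ recollection, a base for the closed sets of $\mathcal{M}_c(X,F)$, this identity says precisely that the bijection $\psi:X\to\psi(X)$ exchanges bases of closed sets. Therefore both $\psi$ and its inverse carry closed sets to closed sets, so $\psi$ is a homeomorphism onto its image.

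Density of $\psi(X)$ in $\mathcal{M}_c(X,F)$ is exactly the display $\overline{\psi(X)}=\mathcal{M}_c(X,F)$ already worked out, which rests on the observation that $\bigcap_{p\in X}M^c_{p,F}=\{0\}$. Finally, compactness of $\mathcal{M}_c(X,F)$ is the general fact about structure spaces of commutative rings with unity recalled from $7M$, \cite{GJ}, and the Hausdorff property follows because $C_c(X,F)$ is a Gelfand ring by Theorem $2.6$. The only real obstacle, given how much preparatory work is already on the page, is to be scrupulous in verifying that the correspondence of basic closed sets under $\psi$ really does upgrade a continuous bijection to a homeomorphism onto $\psi(X)$; once that is checked, the remaining assertions are essentially bookkeeping against the material already in the excerpt.
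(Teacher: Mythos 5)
Your proposal is correct and follows essentially the same route as the paper: the paper's argument is precisely the pre-theorem computation showing $\psi$ is injective, that $\psi(Z_c(f))=\mathcal{M}_f\cap\psi(X)$ exchanges the bases of closed sets (using Theorem $2.9$ for $X$ and the $7M$ facts for the structure space), that $\overline{\psi(X)}=\mathcal{M}_c(X,F)$, and that compactness and Hausdorffness come from the general structure-space facts together with Theorem $2.6$. You have merely made the injectivity step and the base-exchange bookkeeping explicit, which is consistent with what the paper leaves implicit.
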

The next result shows that the last-mentioned compactification enjoys a special extension property.
\begin{theorem}
	The compactification $(\psi,\mathcal{M}_c(X,F))$ enjoys the $C-$extension property (see Definition $2.5$ in \cite{QM2020}) in the following sense, given a compact Hausdorff zero-dimensional space $Y$ and a continuous map $f:X\to Y$, there can be defined a continuous map $f^c:\mathcal{M}_c(X,F)\to Y$ with the following property : $f^c\circ\psi=f$.
\end{theorem}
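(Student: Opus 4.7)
The plan is to construct $f^c$ via the $Z_{F_c}$-ultrafilter description of maximal ideals in $C_c(X,F)$. Given $M \in \mathcal{M}_c(X,F)$, let $\mathscr{U}_M = Z_{F,C}[M]$ be the corresponding $Z_{F_c}$-ultrafilter on $X$ furnished by Theorem $2.4$. Consider the family $\{\overline{f(Z)} : Z \in \mathscr{U}_M\}$ of closed subsets of $Y$. It has the finite intersection property because $f(Z_1 \cap Z_2) \subseteq \overline{f(Z_1)} \cap \overline{f(Z_2)}$ and $\mathscr{U}_M$ consists of non-empty sets closed under finite intersection. By compactness of $Y$, the intersection $\bigcap_{Z \in \mathscr{U}_M} \overline{f(Z)}$ is non-empty.

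The crux of the proof is showing that this intersection is a singleton, so that one may \emph{define} $f^c(M)$ as its unique element. Suppose, to the contrary, that distinct points $y_1, y_2 \in Y$ both lie in the intersection. Using the Hausdorff zero-dimensionality of $Y$, select a clopen set $V \subseteq Y$ with $y_1 \in V$ and $y_2 \notin V$. Then $U = f^{-1}(V)$ is clopen in $X$, and both $U$ and $X \setminus U$ belong to $Z_c(X,F)$, being zero sets of the characteristic functions of their respective complements (which are continuous with range contained in $\{0,1\} \subseteq F$). Since $\mathscr{U}_M$ is a $Z_{F_c}$-ultrafilter and $U \cup (X \setminus U) = X$, at least one of $U$ or $X \setminus U$ lies in $\mathscr{U}_M$ (else both would have to miss some member of $\mathscr{U}_M$, breaking the finite intersection property). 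If $U \in \mathscr{U}_M$, then $y_2 \in \overline{f(U)} \subseteq \overline{V} = V$, a contradiction; if $X \setminus U \in \mathscr{U}_M$, then $y_1 \in \overline{f(X \setminus U)} \subseteq Y \setminus V$, again a contradiction.

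Next I would verify the compatibility $f^c \circ \psi = f$. For $p \in X$, $\mathscr{U}_{M^c_{p,F}}$ is exactly the collection of members of $Z_c(X,F)$ that contain $p$, so $f(p) \in f(Z) \subseteq \overline{f(Z)}$ for every such $Z$, forcing $f^c(\psi(p)) = f(p)$ by uniqueness. For continuity of $f^c$, since clopen sets form a base for the topology of $Y$, it suffices to show $(f^c)^{-1}(V)$ is open for each clopen $V \subseteq Y$. The argument above shows that $f^c(M) \in V$ if and only if $f^{-1}(V) \in \mathscr{U}_M$, which in turn (using that $M$ is a $Z_{F_c}$-ideal, since each maximal ideal is one) is equivalent to $h \in M$ where $h = \chi_{X \setminus f^{-1}(V)} \in C_c(X,F)$. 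Thus $(f^c)^{-1}(V) = \mathcal{M}_c(X,F) \setminus \mathcal{M}_h$, an open set in the hull-kernel topology.

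The principal obstacle is the singleton step: it is precisely here that the hypothesis of zero-dimensionality of $Y$ is indispensable, for otherwise one cannot separate $y_1$ from $y_2$ by a clopen set and thereby pull back a clopen partition of $X$ to which $Z_{F_c}$-ultrafilter dichotomy applies. Everything else, namely non-emptiness, the agreement $f^c \circ \psi = f$, and continuity, follows by routine manipulation once this uniqueness is in hand.
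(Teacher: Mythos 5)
Your proof is correct, but it takes a genuinely different route from the paper's. The paper pulls the maximal ideal $M$ back to an ideal of $C_c(Y,F)$ by setting $\widetilde{M}=\{g\in C_c(Y,F):g\circ f\in M\}$, observes that $\widetilde{M}$ is prime, and then invokes the Gelfand property together with Theorem $2.13$ (every ideal of $C_c(Y,F)$ is fixed when $Y$ is compact) to extract the unique point $y$ with $\bigcap_{g\in\widetilde{M}}Z_c(g)=\{y\}$; continuity is left to an adaptation of an argument from the cited reference. You instead work directly with the $Z_{F_c}$-ultrafilter $\mathscr{U}_M=Z_{F,C}[M]$ and its pushed-forward cluster set $\bigcap_{Z\in\mathscr{U}_M}\overline{f(Z)}$, using the clopen separation in $Y$ and the ultrafilter dichotomy for the complementary clopen zero sets $f^{-1}(V)$ and $X\setminus f^{-1}(V)$ to get uniqueness. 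Your version is more self-contained: it does not route through the ring $C_c(Y,F)$ at all, and it gives an explicit, verifiable continuity argument, whereas the paper's version is shorter because it leverages already-proved structural facts (Gelfand ring, fixedness of ideals over compact $Y$). One small slip at the end: with your $h=\chi_{X\setminus f^{-1}(V)}$ one has $Z_c(h)=f^{-1}(V)$, so $f^c(M)\in V$ iff $h\in M$, i.e.\ $(f^c)^{-1}(V)=\mathcal{M}_h$ rather than its complement; this is harmless because $h$ is an idempotent, so $\mathcal{M}_h=\mathcal{M}_c(X,F)\setminus\mathcal{M}_{1-h}$ is clopen in the hull-kernel topology and the continuity conclusion stands.
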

\begin{proof}
	This can be accomplished by closely adapting the arguments made in the second paragraph in the proof of the Theorem $2.7$ in \cite{QM2020}. However, to make the paper self-contained, we sketch a brief outline of the main points of its proof.\\
	Let $M\in\mathcal{M}_c(X,F)$. Define as in \cite{QM2020}, $\widetilde{M}=\{g\in C_c(Y,F):g\circ f\in M\}$. Then $\widetilde{M}$ is a prime ideal in $C_c(Y,F)$. Since $C_c(Y,F)$ is Gelfand ring and $Y$ is compact and zero-dimensional, it follows from Theorem $2.13$ that there exists a unique $y\in Y$ such that $\bigcap_{g\in\widetilde{M}}Z_c(g)=\{y\}$. Set $f^c(M)=y$. Then $f^c:\mathcal{M}_c(X,F)\to Y$ is the desired continuous map.
\end{proof}
\begin{remark}
	If the structure space $\mathcal{M}_c(X,F)$ of $C_c(X,F)$ is zero-dimensional, then $(\psi, \mathcal{M}_c(X,F))$ is topologically equivalent to the Banaschewski Compactification $\beta_0X$ of $X$. [see the comments after Definition $2.5$ in \cite{QM2020}].
\end{remark}
We shall now impose a condition on $F$; sufficient to make $\mathcal{M}_c(X,F)$ zero-dimensional.
\begin{theorem}
	Suppose the totally ordered field $F$ is either uncountable or a countable subfield of $\mathbb{R}$. Then given $f\in C_c(X,F)$, there exists an idempotent $e\in C_c(X,F)$ such that $e$ is a multiple of $f$ and $(1-e)$ is a multiple of $(1-f)$
\end{theorem}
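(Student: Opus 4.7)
The plan is to produce the required idempotent in the form $e = \chi_A$, the characteristic function of a cleverly chosen clopen subset $A \subset X$ satisfying $\{x \in X : f(x) = 1\} \subset A$ and $A \cap \{x \in X : f(x) = 0\} = \emptyset$. Granted such an $A$, the desired factorizations are forced: put $g(x) = 1/f(x)$ on $A$, $g(x) = 0$ on $X \setminus A$, and dually $h(x) = 0$ on $A$, $h(x) = 1/(1-f(x))$ on $X \setminus A$. Because $A$ and $X \setminus A$ are both clopen, $f$ is nonzero on $A$, and $1-f$ is nonzero on $X \setminus A$, the functions $g, h$ are continuous; their ranges are countable since $f(X)$ is, so $g, h \in C_c(X, F)$, and one checks $fg = e$ and $(1-f)h = 1-e$ on each of the two clopen pieces.

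The candidate for $A$ is the sublevel set $A = \{x \in X : f(x) > c\}$ for a constant $c$ with $0 < c < 1$ and $c \notin f(X)$. The two inequalities guarantee $\{f=1\} \subset A$ and $\{f=0\} \cap A = \emptyset$, while the condition $c \notin f(X)$ forces $X = \{f > c\} \sqcup \{f < c\}$ to be a partition into two open sets, hence each is clopen.

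The hypothesis on $F$ enters precisely at the choice of $c$. When $F$ is uncountable, so is $F \cap (0,1)$, since $F$ is covered by the countable family of intervals $F \cap (-n, n)$, each affinely order-isomorphic to $F \cap (0,1)$; thus $F \cap (0,1) \setminus f(X)$ is nonempty and one may take $c$ inside $F$. When $F$ is a countable subfield of $\mathbb{R}$, it contains $\mathbb{Q}$ and so is dense in $\mathbb{R}$; pick $c \in (0,1) \setminus f(X)$ in $\mathbb{R}$, available because $(0,1) \subset \mathbb{R}$ is uncountable while $f(X)$ is countable. The density of $F$ in $\mathbb{R}$ then permits one to write $\{f > c\} = \bigcup_n \{f > c_n\}$ for a sequence $c_n \in F$ with $c_n \searrow c$, and symmetrically for $\{f < c\}$; both sets are therefore open in $X$, and $A$ is clopen as before.

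The main obstacle is this second case: $F$ may be too small to supply a separating scalar from within, and the argument must genuinely exploit the embedding $F \hookrightarrow \mathbb{R}$ and the density of $F$ in $\mathbb{R}$ to promote an \emph{external} real number $c$ into a constant whose preimage cut is nevertheless clopen in $X$. Once such a $c$ is secured, verifying $e^2 = e$, the continuity and countability of range of $g$ and $h$, and the two factorizations all reduce to routine case-checks on the clopen pieces $A$ and $X \setminus A$.
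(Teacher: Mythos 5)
Your overall strategy is the same as the paper's: pick a scalar $c\in(0,1)$ with $c\notin f(X)$, cut $X$ into the two clopen pieces $\{f>c\}$ and $\{f<c\}$, and let $e$ be the characteristic function of the upper piece. Your explicit multipliers $g=1/f$ on $A$ (extended by $0$) and $h=1/(1-f)$ on $X\setminus A$ are correct and make concrete what the paper only asserts via the neighbourhood relation between $Z_c(f)$ and $Z_c(e)$; your treatment of the countable-subfield case, promoting an external real $c$ to a clopen cut by density of $F$ in $\mathbb{R}$, is also sound and is in fact more detailed than the paper, which dismisses that case as ``analogous.''

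There is, however, a genuine gap in your uncountable case. You justify the uncountability of $F\cap(0,1)$ by writing $F$ as the countable union $\bigcup_n F\cap(-n,n)$ of sets order-isomorphic to $F\cap(0,1)$. That covering is valid only when $F$ is Archimedean; a non-Archimedean ordered field (e.g.\ $\mathbb{R}(t)$ with $t$ infinitely large, or a Hahn series field) contains elements larger than every natural number, so $\bigcup_n(-n,n)\neq F$, and your argument says nothing about where the uncountably many elements live. The theorem as stated must cover such fields, and the paper handles exactly this point with a separate argument: if $[0,1]$ were countable, then each $[n,n+1]$ would be countable, so uncountably many elements of $F^+$ would be infinitely large; their reciprocals are then uncountably many infinitesimals, all lying in $(0,1)$ --- a contradiction. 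You need to supply this (or an equivalent) step before you may choose $c\in F\cap(0,1)\setminus f(X)$ in the non-Archimedean case. With that repair the proof is complete and matches the paper's.
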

\begin{proof}
	We prove this theorem with the assumption that $F$ is uncountable. The proof for the case when $F$ is a countable subfield of $\mathbb{R}$ can be accomplished on using some analogous arguments. We first assert that the interval $[0,1]=\{\alpha\in F: 0\leq\alpha\leq 1\}$ is an uncountable set. This is immediate if $F$ is Archimedean ordered because in that case $F^+=\{\alpha\in F: \alpha
	\geq 0\}=\bigcup_{n\in\mathbb{N}\cup \{0\}}[n,n+1]$ and for each $n\in\mathbb{N}\cup\{0\}$, $[n,n+1]$ is equipotent with $[0,1]$ through the translation map : $\alpha\to(\alpha+n),~\alpha\in[0,1]$. Now suppose that $F$ is non-Archimedean ordered field. If possible let $[0,1]$ be a countable set. Then the set $F^+\texttt{\textbackslash}\bigcup_{n\in\mathbb{N}\cup \{0\}}[n,n+1]$ becomes an uncountable set, which  means that the set of all infinitely large members of $F$ make an uncountable set. Consequently, the set $I=\{\alpha\in F^+:0<\alpha<\frac{1}{n}~for~each~n\in\mathbb{N}\}$ comprising of the infinitely small members of $F$ is an uncountable set. But it is easy to see that $I\subset (0,1)$ and therefore $(0,1)$ turns out to be an uncountable set -- a contradiction. Thus it is proved that $[0,1]$ is an uncountable set [and consequently for any $\alpha>0$ in $F$, $(0,\alpha)$ becomes an uncountable set]. So we can choose $r\in (0,1)$ such that $r\notin f(X)$. Let, $W=\{x\in X:f(x)<r\}=\{x\in X:f(x)\leq r\}$ and so $X\texttt{\textbackslash}W=\{x\in X:f(x)>r\}=\{x\in X: f(x)\geq r\}$. It is clear that $W$ and $X\texttt{\textbackslash}W$ are clopen sets in $X$ and the function $e:X\to F$ defined by $e(W)=\{0\}$ and $e(X\texttt{\textbackslash}W)=\{1\}$ is an idempotent in $C_c(X,F)$. We see that $Z_c(f)\subset Z_c(e)$ and $Z_c(1-f)\subset Z_c(1-e)$ and we can say that $Z_c(e)$ is a neighbourhood of $Z_c(f)$ and $Z_c(1-e)$ is a neighbourhood of $Z_c(1-f)$ in the space $X$. Hence $e$ is a multiple of $f$ and $(1-e)$ is a multiple of $(1-f)$. [compare with the arguments made in Remark $3.6$ in \cite{RM2018}].
\end{proof}
\begin{theorem}
	The structure space $\mathcal{M}_c(X,F)$ of $C_c(X,F)$ is zero-dimensional and hence $\mathcal{M}_c(X,F)=\beta_0X$.
\end{theorem}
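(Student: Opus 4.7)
The plan is to reduce the theorem to establishing that $\mathcal{M}_c(X,F)$ is zero-dimensional, after which Remark~2.16 hands us the identification with $\beta_0X$ for free. Since $\mathcal{M}_c(X,F)$ is already compact Hausdorff by Theorem~2.15, zero-dimensionality is equivalent to producing a basis of clopen sets at each point, and the natural manufacturer of clopens is the idempotent supplied by Theorem~2.17.

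The first observation I would record is that for any idempotent $e\in C_c(X,F)$ the basic closed set $\mathcal{M}_e$ is in fact clopen: since $e(1-e)=0$ lies in every maximal ideal while $e+(1-e)=1$ lies in none, the pair $\mathcal{M}_e,\mathcal{M}_{1-e}$ forms a disjoint cover of $\mathcal{M}_c(X,F)$ by basic closed sets. Now fix $M\in\mathcal{M}_c(X,F)$ together with a basic open neighborhood $U=\{N:f\notin N\}$ of $M$, so $f\notin M$. Because $M$ is maximal, I choose $g\in C_c(X,F)$ with $1-fg\in M$ and then apply Theorem~2.17 to $fg$ (rather than to $f$) to produce an idempotent $e$ satisfying $e=\alpha\cdot fg$ and $1-e=\beta(1-fg)$ for suitable $\alpha,\beta\in C_c(X,F)$. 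Put $V=\mathcal{M}_{1-e}=\{N:e\notin N\}$. Then $V$ is clopen; the identity $1-e=\beta(1-fg)$ places $1-e$ inside $M$, so $e\notin M$ and $M\in V$; and the factorization $e=(\alpha g)f$ shows $f\in N\Rightarrow e\in N$, giving $V\subset U$. Hence clopens form a basis for the topology of $\mathcal{M}_c(X,F)$, which is therefore zero-dimensional, and Remark~2.16 closes the argument.

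The step I expect to be the main obstacle is the choice to feed $fg$ into Theorem~2.17 rather than $f$ itself. A direct application to $f$ produces an idempotent $e$ with $\mathcal{M}_f\subseteq\mathcal{M}_e$, but nothing then prevents $M$ from lying in $\mathcal{M}_e$: Theorem~2.17 forces $e\notin M$ only when $1-f\in M$, and $f\notin M$ does not generally imply this. Replacing $f$ by $fg$ with $fg\equiv 1\pmod{M}$ is the ring-theoretic trick that converts the given information $f\notin M$ into the usable statement $1-fg\in M$, and it depends on $M$ being maximal (so that $C_c(X,F)/M$ is a field) rather than merely prime.
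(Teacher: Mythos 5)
Your proposal is correct and follows essentially the same route as the paper: both reduce to producing, for $M$ with $f\notin M$, an idempotent via Theorem~2.17 applied to the decomposition $1=fh+g$ with $g\in M$ coming from maximality, and then observe that $\mathcal{M}_e$ and $\mathcal{M}_{1-e}$ are complementary clopen basic sets. The only cosmetic difference is that you feed $fg\equiv 1\pmod M$ into Theorem~2.17 while the paper feeds in the complementary element $g\in M$, which merely interchanges the roles of $e$ and $1-e$.
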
\vspace{-3mm}
[Here $F$ is either uncountable or a countable subfield of $\mathbb{R}$]
\begin{proof}
	Recall the notation for $f\in C_c(X,F)$, $\mathcal{M}_f=\{M\in\mathcal{M}_c(X,\\F):f\in M\}$. Suppose $M\in\mathcal{M}_c(X,F)$ is such that $M\notin\mathcal{M}_f$. It suffices to find out an idempotent $e$ in $C_c(X,F)$ with the property : $\mathcal{M}_f\subset\mathcal{M}_e$ and $M\notin\mathcal{M}_e$. The simple reason is that $e.(1-e)=e-e^2=e-e=0$ and hence $\mathcal{M}_e=\mathcal{M}_c(X,F)\texttt{\textbackslash}\mathcal{M}_{(1-e)}$, consequently $\mathcal{M}_e$ is a clopen set in $\mathcal{M}_c(X,F)$. Now towards finding out such an idempotent let us observe that $M\notin\mathcal{M}_f$ implies that  $f\notin M$, which further implies that $<f,M>=C_c(X,F)$. Hence we can write  : $1=f.h+g$, where $h\in C_c(X,F)$ and $g\in M$. By Theorem $2.17$, there exists an idempotent $e$ in $C_c(X,F)$ such that $e=g_1.g$ and $(1-e)=g_2.(1-g)$, where $g_1,g_2\in C_c(X,F)$. Now let $N\in\mathcal{M}_f$, then $f\in N$ and so $f.h\in N$, which implies that $(1-g)\in N$ consequently $(1-e)\in N$. Therefore $e\notin N$, which means that $N\notin\mathcal{M}_e$, i.e., $N\in\mathcal{M}_c(X,F)\texttt{\textbackslash}\mathcal{M}_e$. Again since $g\in M$, it follows that $e\in M$, thus $M\in \mathcal{M}_e$.
\end{proof}
\begin{remark}
	On choosing $F=\mathbb{R}$ and $X=\mathbb{Q}$ in the above Theorem $2.18$, we get that $\beta_0\mathbb{Q}=$ structure space of $C(\mathbb{Q},\mathbb{R})=\beta\mathbb{Q}$. Thus $\beta\mathbb{Q}$ becomes zero-dimensional, i.e., $\mathbb{Q}$ is strongly zero-dimensional. This is a standard result in General Topology -- indeed a Lindel\"{o}f zero-dimensional space is strongly zero-dimensional. [Theorem $6.2.7$, \cite{GT}].
\end{remark}
One of the major achievements in the theory of $C(X)$ is that a complete description of the maximal ideals in this ring can be given. This is a remark made in the beginning of Chapter $6$ in \cite{GJ}. In order to give such a description, it becomes convenient to archive $\beta X$ as the space of $Z-$ultrafilter on $X$ equipped with the Stone- topology and formal construction of such a thing is dealt in rigorously in Chapter $6$ in \cite{GJ}. We follow the same technique in order to furnish an explicit description of maximal ideals in $C_c(X,F)$.\\
For each $p\in X$, let $A^c_{p,F}=\{Z\in Z_c(X,F):p\in Z\}\equiv Z_{F,C}[M^c_{p,F}]$. Thus $X$ is a readymade index set for the family of fixed $Z_{F_c}-$ultrafilters on $X$. As in Chapter $6$, \cite{GJ}, we extend the set $X$ to a set $\alpha X$ to serve as  an index set for the family of all $Z_{F_c}-$ultrafilters on $X$. For $p\in\alpha X$, let the corresponding $Z_{F_c}-$ultrafilter be designated as $A^{p,F}_c$ with the understanding that if $p\in X$, then $A^{p,F}_c=A^c_{p,F}$.\\
For $Z\in Z_c(X,F)$, let $\overline{Z}=\{p\in\alpha X:Z\in A^{p,F}_c\}$. Then $\{\overline{Z}:z\in Z_c(X,F)\}$ makes a base for the closed sets of some topology on $\alpha X$ in which for $Z\in Z_c(X,F)$, $\overline{Z}=cl_{\alpha X}Z$. Furthermore, for $Z_1,Z_2\in Z_c(X,F)$, $\overline{Z_1\cap Z_2}=\overline{Z_1}\cap\overline{Z_2}$ and $\alpha X$ becomes a compact Hausdorff space containing $X$ as a dense subset. Also given a point $p\in\alpha X$, $A^{p,F}_c$ is the unique $Z_{F_c}-$ultrafilter on $X$ which converges to $p$ and finally $\alpha X$ possesses the $C-$extension property meaning that if $Y$ is a compact Hausdorff zero-dimensional space and $f:X\to Y$, a continuous map, then $f$ can be extended to a continuous map $f^{\#}:\alpha X\to Y$. All these facts can be realized just by closely following the arguments in Chapter $6$ in \cite{GJ}.
\begin{theorem}
	The space $\alpha X$ is a zero-dimensional space. 
\end{theorem}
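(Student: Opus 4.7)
The plan is to exhibit a base of clopen sets for $\alpha X$, using the closures $\overline{W}$ of clopen subsets $W\subseteq X$. These sets are already available because each clopen $W\subseteq X$ satisfies $\chi_W\in C_c(X,F)$ and therefore $W\in Z_c(X,F)$, whence $\overline{W}$ is a basic closed set of $\alpha X$.

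First I would check that $\overline{W}$ is in fact clopen whenever $W$ is clopen in $X$. The key is that $\overline{W}$ and $\overline{X\setminus W}$ are complementary in $\alpha X$: their intersection is $\overline{W\cap(X\setminus W)}=\overline{\emptyset}=\emptyset$ by the intersection formula recorded for the bar operation, while their union exhausts $\alpha X$ because $W$ and $X\setminus W$ are complementary members of $Z_c(X,F)$, and so any $Z_{F_c}$-ultrafilter $A^{q,F}_c$ must contain exactly one of them by maximality together with upward closure of filters inside $Z_c(X,F)$.

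The main step is to show that these clopen sets form a base. Take $p\in\alpha X$ and a basic open neighborhood $\alpha X\setminus\overline{Z}$ of $p$; then $Z\notin A^{p,F}_c$, so the ultrafilter property supplies some $Z'\in A^{p,F}_c$ with $Z\cap Z'=\emptyset$. Writing $Z=Z_c(f)$ and $Z'=Z_c(g)$, the function $u=f^2/(f^2+g^2)$ lies in $C_c(X,F)$, vanishes on $Z$, and equals $1$ on $Z'$. Applying Theorem $2.17$ to $u$ yields an idempotent $e\in C_c(X,F)$ with $Z\subseteq Z_c(u)\subseteq Z_c(e)$ and $Z'\subseteq Z_c(1-u)\subseteq Z_c(1-e)$. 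Because $e^2=e$ forces $e(x)\in\{0,1\}$, we have $e=\chi_W$ for the clopen set $W=\{x\in X:e(x)=1\}$, and by construction $Z'\subseteq W$ while $Z\cap W=\emptyset$. Consequently $p\in\overline{Z'}\subseteq\overline{W}$, whereas $\overline{W}\cap\overline{Z}=\overline{W\cap Z}=\overline{\emptyset}=\emptyset$, so $\overline{W}$ is the desired clopen neighborhood of $p$ sitting inside $\alpha X\setminus\overline{Z}$.

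The principal obstacle is the production of a clopen set $W\subseteq X$ separating the disjoint zero sets $Z$ and $Z'$; mere zero-dimensionality of $X$ does not by itself guarantee such a separation, and it is exactly here that the hypothesis on $F$ (either uncountable or a countable subfield of $\mathbb{R}$), routed through Theorem $2.17$, must be invoked to furnish the required idempotent and with it the separating clopen set.
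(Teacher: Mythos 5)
Your proof is correct and takes essentially the same route as the paper's: both reduce to producing a clopen subset of $X$ that contains a member $Z'$ of $A^{p,F}_c$ and misses $Z$, with the hypothesis on $F$ entering exactly to manufacture the separating idempotent. The only cosmetic difference is that you feed $f^2/(f^2+g^2)$ into Theorem $2.17$, whereas the paper builds one separating function $f$ and directly picks $r\in(0,1)$ with $r\notin f(X)$ to form the clopen set $\{x\in X:f(x)\geq r\}$ --- the same mechanism that underlies Theorem $2.17$ itself.
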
\vspace{-3mm}
[Blanket assumption: $F$ is either an uncountable field or a countable subfield of $\mathbb{R}$]
\begin{proof}
	Let $p\in\alpha X$ and $Z\in Z_c(X,F)$ be such that $p\notin\overline{Z}$.\\
	It suffices to find out a clopen set $K$ in $\alpha X$ such that $\overline{Z}\subset K$ and $p\notin K$.\\
	Now $p\notin\overline{Z}\implies Z\notin A^{p,F}_c$. Since $A^{p,F}_c$ is a $Z_{F_c}-$ultrafilter, this implies that there exists $Z^*\in A^{p,F}_c$ such that $Z\cap Z^*=\phi$. Hence there exists $f\in C_c(X,F)$ such that $f:X\to [0,1]$ in $F$ such that $f(Z^*)=\{0\}$ and $f(Z)=\{1\}$. Using the hypothesis that $F$ is an uncountable field, and take note of the arguments in the proof of the Theorem $2.17$, we can find out an $r\in(0,1)$ in $F$ such that $r\notin f(X)$ [analogous arguments can be made if $F$ is a countable subfield of $\mathbb{R}$].\\
	Let $K=\{x\in X:f(x)>r\}=\{x\in X:f(x)\geq r\}$. Then $K$ is a clopen set in $X$ containing $Z$ and therefore $\overline{Z}\subset K$. Now $Z^*\subset X\texttt{\textbackslash}K$ implies $Z^*\cap K=\phi$ and hence $\overline{Z^*}\cap\overline{K}=\phi$, i.e., $\overline{Z^*}\cap K=\phi$. Since $Z^*\in A^{p,F}_c$ and therefore $p\in Z^*$, this further implies that $p\notin K$.
\end{proof}
\begin{remark}
	Since $\alpha X$ enjoys the $C-$extension property and is zero-dimensional, it follows from Definition $2.5$ in \cite{QM2020} that $\alpha X$ is essentially the same as $\beta_0 X$, the Banaschewski Compactification of $X$ and hence we can write for any $p\in\beta_0 X$ and $Z\in Z_c(X,F)$, $Z\in A^{p,F}_c$ if and only if $p\in cl_{\beta_0 X}$. If we now write  $M^{p,F}_c=Z_{F,C}^{-1}[A^{p,F}_c]$, then this becomes a maximal ideal in $C_c(X,F)$. Since by Theorem $2.4(2)$, there is already realized a one-to-one correspondence between maximal ideals in $C_c(X,F)$ and $Z_{F_c}-$ultrafilters on $X$ via the map $M\to Z_{F,C}[M]$, a complete description of the maximal ideals in $C_c(X,F)$ is given by the list $\{M^{p,F}_c:p\in\beta_0 X\}$ where $M^{p,F}_c=\{f\in c_c(X,F):p\in cl_{\beta_0 X}Z_c(f)\}$
\end{remark}
\section{The ideals $O^{p,F}_c$ and a formula for $C^c_K(X,F)$}
For each $p\in\beta_0 X$, set $$O^{p,F}_c=\{f\in C_c(X,F):cl_{\beta_0 X}Z_c(f)~is~a~neighbourhood~of~p~in~\beta_0 X\}$$
Then the following facts come out as modified countable analogue of the relations between the ideals $M^p$ and $O^p$ in the classical scenario recorded in $7.12$, $7.13$, $7.15$ in \cite{GJ}. Also see Lemma $4.11$ in \cite{RM2018} in this connection.
\begin{theorem}
	Let the ordered field $F$ be either uncountable or a countable subfield of $\mathbb{R}$. Then for a zero-dimensional Hausdorff space $X$, the following statements are true : 
	\begin{enumerate}
		\item $O^{p,F}_c$ is a $Z_{F_c}-$ideal in $C_c(X,F)$ contained in $M^{p,F}_c$.
		\item $O^{p,F}_c=\{f\in C_c(X,F):there~exists~an~open~neighbourhood~V\\of~p~in~\beta_0 X~such~that~Z_c(f)\supset V\cap X\}$.
		\item For $p\in\beta_0X$ and $f\in C_c(X,F)$, $f\in O^{p,F}_c$ if and only if there exists $g\in C_c(X,F)\texttt{\textbackslash}M^{p,F}_c$ such that $f.g=0$, hence each non-zero element in $O^{p,F}_c$ is a divisor of zero in $C_c(X,F)$. Indeed $O^{p,F}_c$ is a $z^o-$ideal in $C_c(X,F)$.
		\item An ideal $I$ in $C_c(X,F)$ is extendable to a unique maximal ideal if and only if there exists $p\in\beta_0 X$ such that $O^{p,F}_c\subset I$.
		\item For $p\in\beta_0X$, $O^{p,F}_c$ is a fixed ideal if and only if $p\in X$.
	\end{enumerate}
\end{theorem}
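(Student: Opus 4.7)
The plan is to mirror, step by step, the classical arguments for $(M^p, O^p)$ in $C(X)$ found in [GJ, 7.12--7.15], using at every relevant moment two new tools available in the countable ordered-field setting: the zero-dimensionality of $\beta_0X$ just proved in Theorem 2.20, and the abundance of idempotents in $C_c(X,F)$ coming from Theorem 2.17. The standing device is this: for any clopen $K\subset\beta_0X$, the characteristic function $\chi_{K\cap X}$ lies in $C_c(X,F)$, takes only the values $0$ and $1$, and, by density of $X$ in $\beta_0X$, satisfies $cl_{\beta_0X}(K\cap X)=K$. Combined with the identity $cl_{\beta_0X}Z_c(f)\cap X=Z_c(f)$ (valid because $Z_c(f)$ is closed in $X$), this lets me manufacture test functions whose zero sets trace out arbitrary clopen neighbourhoods of any $p\in\beta_0X$.

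For (1), closure of $O^{p,F}_c$ under sum and external multiplication follows from $Z_c(f+g)\supset Z_c(f)\cap Z_c(g)$ and $Z_c(hf)\supset Z_c(f)$ plus stability of neighbourhoods under finite intersection; the $Z_{F_c}$-ideal property is immediate because the defining condition depends only on $Z_c(f)$; and $O^{p,F}_c\subset M^{p,F}_c$ is trivial since a neighbourhood of $p$ contains $p$. For (2), if $V$ is open in $\beta_0X$ with $p\in V$ and $V\cap X\subset Z_c(f)$, density gives $V\subset cl_{\beta_0X}(V\cap X)\subset cl_{\beta_0X}Z_c(f)$; conversely, choose an open $V\subset cl_{\beta_0X}Z_c(f)$ containing $p$ and use the above identity to deduce $V\cap X\subset Z_c(f)$.

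For (3), given $f\in O^{p,F}_c$ use (2) and Theorem 2.20 to pick a clopen $K\ni p$ with $K\cap X\subset Z_c(f)$; then $g:=\chi_{K\cap X}\in C_c(X,F)$ satisfies $fg=0$, and $Z_c(g)=X\setminus K$ so $cl_{\beta_0X}Z_c(g)\subset\beta_0X\setminus K$, excluding $p$ and giving $g\notin M^{p,F}_c$. The converse is symmetric: if $fg=0$ with $g\notin M^{p,F}_c$, then $V:=\beta_0X\setminus cl_{\beta_0X}Z_c(g)$ is an open neighbourhood of $p$ with $V\cap X\subset X\setminus Z_c(g)\subset Z_c(f)$. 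Consequently $f\in O^{p,F}_c$ iff $\mathrm{ann}(f)\not\subset M^{p,F}_c$, and the $z^o$-ideal property follows: if $\mathrm{ann}(f)\subset\mathrm{ann}(h)$ and $f\in O^{p,F}_c$, then $\mathrm{ann}(h)\not\subset M^{p,F}_c$, whence $h\in O^{p,F}_c$.

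For (4), the subtlest step, the "if" direction goes by contradiction: suppose $O^{p,F}_c\subset I\subset M^{q,F}_c$ with $q\neq p$, pick disjoint clopen $K\ni p$ and $L\ni q$ in $\beta_0X$, and observe that $\chi_{X\setminus K}\in O^{p,F}_c$ has zero set $K\cap X$ with $\beta_0X$-closure equal to $K$, forcing $q\in K$ and contradicting $K\cap L=\emptyset$. For the "only if" direction, assume $I$ is contained in a unique maximal ideal $M^{p,F}_c$ and take $f\in O^{p,F}_c$; by (3) pick $g\notin M^{p,F}_c$ with $fg=0$. The ideal $I+(g)$ cannot be proper, for otherwise it would extend to a maximal ideal containing $I$, which by uniqueness would be $M^{p,F}_c\ni g$, a contradiction; hence $1=i+gh$ for some $i\in I,h\in C_c(X,F)$, and multiplying by $f$ gives $f=fi\in I$. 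Finally (5) is quick: for $p\in X$, $O^{p,F}_c\subset M^{p,F}_c=M^c_{p,F}$ is fixed at $p$; for $p\in\beta_0X\setminus X$ and any candidate common zero $x\in X$, zero-dimensionality yields a clopen $K\ni p$ with $x\notin K$, whence $\chi_{X\setminus K}\in O^{p,F}_c$ has zero set $K\cap X\not\ni x$. I expect the "only if" half of (4) to be the main technical obstacle, since it is the only place that must fuse the Gelfand property of $C_c(X,F)$ (Theorem 2.6), the zero-divisor characterisation from (3), and the idempotent construction into one coherent argument.
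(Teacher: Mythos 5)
Your proposal is correct and follows essentially the same route as the paper: parts (1), (2) and (4) are the standard Gillman--Jerison 7.12--7.15 arguments (which the paper simply cites), and your proofs of (3) and (5) coincide with the paper's, differing only in that you use the characteristic function of $K\cap X$ where the paper restricts an idempotent defined on all of $\beta_0X$ with the roles of $K$ and its complement interchanged. The only cosmetic omission is that for the $z^o$-ideal claim you tacitly invoke the reduced-ring formula $P_f=\{h:\mathrm{Ann}(f)\subset\mathrm{Ann}(h)\}$, which the paper makes explicit by citing Proposition 1.5 of the Azarpanah--Karamzadeh--Aliabad reference.
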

\begin{proof}
	The statements $(1)$, $(2)$ and $(4)$ can be proved by making arguments parallel to those adopted to prove the corresponding results in the classical situation with $F=\mathbb{R}$ in Sections $7.12,7.13, 7.15$ in \cite{GJ}. We prove only the statements $(3)$ and $(5)$.\\
	To prove $(3)$, let $f\in O^{p,F}_c$. Then by $(2)$, there exists an open neighbourhood $V$ of $p$ in $\beta_0X$ such that $Z_c(f)\supset V\cap X$. Since $\beta_0X$ is zero dimensional, there exists a clopen set $K$ in $\beta_0X$ such that $\beta_0X\texttt{\textbackslash}V\subset K$ and $p\notin K$. The function $h:\beta_0X\to F$, defined by $h(K)=\{0\}$ and $h(\beta_0X\texttt{\textbackslash}K)=\{1\}$ belongs to $C_c(\beta_0X,F)$. Take $g=h|_X$. Then $g\in C_c(X,F)$, $f.g=0$ and $p\notin cl_{\beta_0X}Z_c(g)$, hence $g\notin M^{p,F}_c$.\\
	Conversely let there exist $g\in C_c(X,F)\texttt{\textbackslash}M^{p,F}_c$ such that $f.g=0$. Then $p\notin cl_{\beta_0X}Z_c(g)$. Therefore there exists an open neighbourhood $V$ of $p$ in $\beta_0X$ such that $V\cap Z_c(g)=\phi$. Since $Z_c(f)\cup Z_c(g)=X$, it follows that $X\cap V\subset Z_c(f)$. Hence from $(2)$, we get that $f\in O^{p,F}_c$.\\
	To prove the last part of $(3)$, we recall that an ideal $I$ in a commutative ring $A$ with unity is called a $z^o-$ideal if for each $a\in I$, $P_a\subset I$, where $P_a$ is the intersection of all minimal prime ideals in $A$ containing $a$. We reproduce the following useful formula from Proposition $1.5$ in \cite{CA2000}, which is also recorded in Theorem $3.10$ in \cite{QM2020} : if $A$ is a reduced ring meaning that $0$ is the only nilpotent member of $A$, then $P_a=\{b\in A: Ann(a)\subset Ann(b)\}$, where $Ann(a)=\{c\in A:a.c=0\}$ is the annihilator of $a$ in $A$. Hence for any $f\in C_c(X,F)$, $P_f\equiv$ the intersection of all minimal prime ideals in $C_c(X,F)$ which contain $f=\{g\in C_c(X,F): Ann(f)\subset Ann(g)\}$.\\
	Now to show that $O^{p,F}_c$ is a $z^o-$ideal in $C_c(X,F)$, for any $p\in\beta_0X$, choose $f\in O^{p,F}_c$ and $g\in P_f$. Therefore $Ann(f)\subset Ann(g)$. But from the result $(3)$, we see that there exists $h\in C_c(X,F)\texttt{\textbackslash}M^{p,F}_c$ such that $f.h=0$ and hence $h\in Ann(f)$. Consequently, $h\in Ann(g)$, i.e., $g.h=0$. Thus $P_f\subset O^{p,F}_c$ and hence $O^{p,F}_c$ is a $z^o-$ideal in $C_c(X,F)$.\\
	Proof of $(5)$ : If $p\in X$, then $M^{p,F}_c=M^c_{p,F}$, a fixed ideal, hence $O^{p,F}_c$ is also fixed.\\
	Now let $p\in\beta_0X\texttt{\textbackslash}X$. Choose $x\in X$ and a closed neighbourhood $W$ of $p$ in $\beta_0X$ such that $x\notin W$. Since $\beta_0X$ is zero-dimensional, there exists a clopen set $K$ in $\beta_0X$ such that $W\subset K$ and $x\notin K$. Let $g:\beta_0X\to F$ be defined by $g(K)=\{0\}$ and $g(\beta_0X\texttt{\textbackslash}K)=\{1\}$. Then $g\in C_c(\beta_0X,F)$ and hence $h=g|_X\in C_c(X,F)$. We observe that $h(x)=1$ and $Z_c(h)\supset K\cap X$. It follows from the result $(2)$ that $h\in O^{p,F}_c$. This proves that $O^{p,F}_c$ is a free ideal in $C_c(X,F)$
\end{proof}
The following properties of $C^c_K(X,F)=\{f\in C_c(X,F):f~has\\compact~support~i.e.~cl_X(X\texttt{\textbackslash}Z_c(f))~is~compact\}$ can be established as parallel to the analogous properties of the ring $C_K(X)=\{f\in C(X):f~has~compact~support\}$ given in $4D$, \cite{GJ}.
\begin{theorem}
	Let $X$ be Hausdorff and zero-dimensional. Then : 
	\begin{enumerate}
		\item $C^c_K(X,F)\subset C_c(X,F)\cap C^*(X,F)$, where $C^*(X,F)=\{f\in C(X,F):cl_Ff(X)~is~compact\}$ and equality holds if and only if $X$ is compact.
		\item If $X$ is non-compact, then $C^c_K(X,F)$ is an ideal(proper) of $C_c(X,F)$.
		\item $C^c_K(X,F)$ is contained in every free ideal of $C_c(X,F)$. $C^c_K(X,F)$ itself is a free ideal of $C_c(X,F)$ if and only if $X$ is non-compact and locally compact.
		\item $X$ is nowhere locally compact if and only if $C^c_K(X,F)=\{0\}$ and this is the case when and only when $\beta_0X\texttt{\textbackslash}X$ is dense in $\beta_0X$. [Compare with $7F4$, \cite{GJ}] 
	\end{enumerate}
\end{theorem}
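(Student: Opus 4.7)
The plan is to work through the four parts sequentially, each a direct transfer of the classical argument in $4D$ of \cite{GJ} with the extra bookkeeping for countable range and the field $F$.

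For $(1)$, given $f\in C^c_K(X,F)$ with support $S_f=cl_X(X\setminus Z_c(f))$, the set $f(S_f)$ is the continuous image of a compact space and $f(X)=f(S_f)\cup\{0\}$, so $cl_F f(X)$ is compact; hence $f\in C_c(X,F)\cap C^*(X,F)$. If $X$ is compact, then $S_f\subset X$ is compact for every $f\in C_c(X,F)$, giving equality. Conversely, if equality holds, the constant function $1$ lies in $C^c_K(X,F)$, which forces $X=cl_X(X\setminus Z_c(1))$ to be compact. For $(2)$, the standard inclusions $\operatorname{supp}(f+g)\subset\operatorname{supp}(f)\cup\operatorname{supp}(g)$ and $\operatorname{supp}(fg)\subset\operatorname{supp}(f)$ make $C^c_K(X,F)$ an ideal, and by $(1)$ it is a proper ideal precisely when $X$ is non-compact.

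The main content is in $(3)$. Let $I$ be a free ideal and $g\in C^c_K(X,F)$ with compact support $S$. For each $x\in S$ pick $f_x\in I$ with $f_x(x)\neq 0$; the sets $\{y:f_x(y)\neq 0\}$ cover $S$, so a finite subcollection $f_1,\dots,f_n$ works. Put $h=f_1^{\,2}+\cdots+f_n^{\,2}\in I$. Because $F$ is an ordered field, $h>0$ on $S$, hence the open set $V=\{h\neq 0\}$ contains $S$. Define $u\colon X\to F$ by $u=g/h$ on $V$ and $u=0$ on $X\setminus\operatorname{supp}(g)$; the two formulas agree on the overlap (since $g=0$ there) and the two open sets cover $X$. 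Then $u$ is continuous with $u(X)\subset\{g(y)/h(y):y\in V\}\cup\{0\}$, which is countable as both $g(X)$ and $h(X)$ are. Thus $u\in C_c(X,F)$ and $g=uh\in I$. For the second half of $(3)$, $C^c_K(X,F)$ is free iff for every $x\in X$ some compactly supported continuous function is nonzero at $x$; using the zero-dimensionality of $X$ one builds such a function as the indicator $\chi_V$ of a clopen neighbourhood $V$ of $x$ contained in a compact neighbourhood, so freeness is equivalent to local compactness. Combined with properness (part $(2)$), this yields $X$ non-compact and locally compact.

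For $(4)$, if $f\in C^c_K(X,F)$ is nonzero and $f(x)\neq 0$, then $\operatorname{supp}(f)$ is a compact neighbourhood of $x$, so $C^c_K(X,F)\neq\{0\}$ is equivalent to local compactness at some point, i.e. $C^c_K(X,F)=\{0\}$ iff $X$ is nowhere locally compact. For the density equivalence, I would show that $X$ is locally compact at some $x$ iff $\operatorname{int}_{\beta_0X}(X)\neq\emptyset$. If $X$ is locally compact at $x$ with compact neighbourhood $K$ and $x\in U\subset K$ with $U$ open in $X$, write $U=V\cap X$ with $V$ open in $\beta_0X$; if some $q\in V\setminus K$ existed, then $V\cap(\beta_0X\setminus K)$ would be a nonempty open set in $\beta_0X$ meeting the dense subset $X$, contradicting $U\subset K$. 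Hence $V\subset K\subset X$, so $x\in\operatorname{int}_{\beta_0X}(X)$. Conversely, if $\operatorname{int}_{\beta_0X}(X)\neq\emptyset$, any point $x$ in this interior has a closed (hence compact) $\beta_0X$-neighbourhood contained in $X$, which is a compact neighbourhood in $X$.

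The only real obstacle is $(3)$: the classical proof relies implicitly on $\mathbb{R}$-valued features, so one must ensure the constructed quotient $u=g/h$ actually belongs to $C_c(X,F)$ rather than merely $C(X,F)$. Strict positivity of $h$ on $S$ comes for free from the ordered-field hypothesis, while compactness of $\operatorname{supp}(g)$ is what makes the two open sets $V$ and $X\setminus\operatorname{supp}(g)$ cover $X$; countability of $u(X)$ is inherited from $g$ and $h$. The rest of the theorem is essentially a translation of the $F=\mathbb{R}$ arguments.
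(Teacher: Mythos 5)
Your proposal is correct, and it follows the same route the paper intends: the paper gives no proof of this theorem, merely citing the classical arguments of $4D$ in \cite{GJ}, and your write-up is exactly that transfer, with the right extra checks (countability of the range of $g/h$, strict positivity of a sum of squares in an ordered field, and clopen neighbourhoods inside compact ones via zero-dimensionality). No gaps worth flagging.
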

\begin{remark}
	$C^c_K(X,F)\subset \bigcap\{O^{p,F}_c:p\in \beta_0X\texttt{\textbackslash}X\}$.\\
	This follows from Theorem $3.1(5)$ and Theorem $3.2(3)$.
\end{remark}
To show that equality holds in the last inclusion relation, we need the following subsidiary result.
\begin{theorem}
	Let $f\in C_c(X,F)$ be such that $cl_{\beta_0X}Z_c(f)$ is a neighbourhood of $\beta_0X\texttt{\textbackslash}X$. Then $f\in C^c_K(X,F)$.
\end{theorem}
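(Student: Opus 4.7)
The plan is to show directly that $cl_X(X\setminus Z_c(f))$ is compact by realizing it as a closed subset of $\beta_0X$ which happens to lie entirely inside $X$. The key technical input is the fact (implicit in the construction of $\beta_0X = \alpha X$ in the text preceding Theorem~2.20) that for any $Z \in Z_c(X,F)$, one has $cl_{\beta_0X}Z \cap X = Z$: indeed, a point $p \in X$ lies in $cl_{\beta_0X}Z$ iff $Z \in A^{p,F}_c = A^c_{p,F}$, iff $p \in Z$. I would record this identity as the starting point.

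Next, use the hypothesis to pick an open set $V$ in $\beta_0X$ with $\beta_0X\setminus X \subset V \subset cl_{\beta_0X}Z_c(f)$. Intersecting with $X$ gives
\[
V\cap X \;\subset\; cl_{\beta_0X}Z_c(f)\cap X \;=\; Z_c(f),
\]
so contrapositively $X\setminus Z_c(f)\subset X\setminus V \subset \beta_0X\setminus V$. Since $\beta_0X\setminus V$ is closed in $\beta_0X$, taking closures in $\beta_0X$ yields
\[
cl_{\beta_0X}\bigl(X\setminus Z_c(f)\bigr) \;\subset\; \beta_0X\setminus V \;\subset\; \beta_0X\setminus(\beta_0X\setminus X) \;=\; X.
\]

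Thus the $\beta_0X$-closure of $X\setminus Z_c(f)$ already sits inside $X$, so it coincides with $cl_X(X\setminus Z_c(f))$. Being a closed subset of the compact space $\beta_0X$, it is compact, which says exactly that $f$ has compact support, i.e.\ $f \in C^c_K(X,F)$.

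I expect the only potentially delicate point is the identity $cl_{\beta_0X}Z \cap X = Z$ for $Z \in Z_c(X,F)$, since the paper states it only implicitly when constructing $\alpha X$; but it reduces to the definition of the closed base $\{\overline{Z} : Z\in Z_c(X,F)\}$ on $\alpha X$ together with the fact that $A^{p,F}_c = A^c_{p,F}$ for $p\in X$. Everything else is routine point-set topology in the compactification $\beta_0X$.
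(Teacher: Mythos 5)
Your argument is correct and is essentially the paper's own proof: both use the hypothesis to produce an open set $V\supset\beta_0X\setminus X$ contained in $cl_{\beta_0X}Z_c(f)$, observe that $V$ misses $X\setminus Z_c(f)$ (via $cl_{\beta_0X}Z_c(f)\cap X=Z_c(f)$) and hence misses its $\beta_0X$-closure, and conclude that $cl_X(X\setminus Z_c(f))$ is a closed, hence compact, subset of $\beta_0X$. Your justification of the trace identity via $A^{p,F}_c=A^c_{p,F}$ is sound (and the paper gets the same fact from $Z_c(f)$ being closed in the dense subspace $X$), so there is nothing to fix.
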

\begin{proof}
	It suffices to show that $supp(f)\equiv cl_X(X\texttt{\textbackslash}Z_c(f))$ is closed in $\beta_0X$ and hence compact. As $Z_c(f)$ is closed in $X$, it follows that $cl_{\beta_0X}Z_c(f)\cap (X\texttt{\textbackslash}Z_c(f))=\phi$. The hypothesis tells that there exists an open set $W$ in $\beta_0X$ such that $\beta_0X\texttt{\textbackslash}X\subset W\subset cl_{\beta_0X}Z_c(f)$. Hence $W\cap (X\texttt{\textbackslash}Z_c(f))=\phi$, which further implies because $W$ is open in $\beta_0X$ that $W\cap cl_{\beta_0X}(X\texttt{\textbackslash}Z_c(f))=\phi$. Consequently $W\cap cl_X(X\texttt{\textbackslash}Z_c(f))=\phi$. Since $\beta_0X\texttt{\textbackslash}X\subset W$, it follows therefore that no point of $\beta_0X\texttt{\textbackslash}X$ is a limit point of $cl_X(X\texttt{\textbackslash}Z_c(f))$ in the space $\beta_0X$. Thus there does not exist any limiting point of $cl_X(X\texttt{\textbackslash}Z_c(f))$ outside it in the entire space $\beta_0X$. Hence $cl_X(X\texttt{\textbackslash}Z_c(f))$ is closed in $\beta_0X$.
\end{proof}
\begin{theorem}
	Let $X$ be zero-dimension and Hausdorff. Then 
	$C^c_K(X,F)=\bigcap\{O^{p,F}_c:p\in \beta_0X\texttt{\textbackslash}X\}$
\end{theorem}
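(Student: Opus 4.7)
The plan is to combine Remark $3.4$ (which gives the inclusion $\subseteq$) with Theorem $3.4$ (which gives a sufficient condition for membership in $C^c_K(X,F)$), so the real work is only the reverse inclusion $\bigcap_{p\in\beta_0X\setminus X}O^{p,F}_c\subseteq C^c_K(X,F)$.

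Fix $f\in\bigcap_{p\in\beta_0X\setminus X}O^{p,F}_c$. Unpacking the definition of $O^{p,F}_c$, for every $p\in\beta_0X\setminus X$ the set $cl_{\beta_0X}Z_c(f)$ is a neighbourhood of $p$ in $\beta_0X$, so there exists an open set $V_p\subseteq\beta_0X$ with $p\in V_p\subseteq cl_{\beta_0X}Z_c(f)$. The next step is to aggregate this pointwise data into a single neighbourhood of the whole remainder: set
$$W=\bigcup_{p\in\beta_0X\setminus X}V_p.$$
Then $W$ is open in $\beta_0X$, $\beta_0X\setminus X\subseteq W$, and $W\subseteq cl_{\beta_0X}Z_c(f)$; hence $cl_{\beta_0X}Z_c(f)$ is a neighbourhood of $\beta_0X\setminus X$ in $\beta_0X$.

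With this established, Theorem $3.4$ applies directly and gives $f\in C^c_K(X,F)$, which completes the proof. There is no genuine obstacle here: the forward inclusion is purely a reference (Theorem $3.1(5)$ together with Theorem $3.2(3)$, as already recorded in Remark $3.4$), and the reverse inclusion is the observation that ``neighbourhood of each point'' automatically upgrades to ``neighbourhood of the union,'' after which the hard analytic content has already been packaged into Theorem $3.4$. The only point worth highlighting in the write-up is this upgrading step, since Theorem $3.4$'s hypothesis is stated for the set $\beta_0X\setminus X$ rather than for each of its points.
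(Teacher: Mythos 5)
Your proposal is correct and follows essentially the same route as the paper: the forward inclusion is quoted from the remark recording $C^c_K(X,F)\subset\bigcap\{O^{p,F}_c:p\in\beta_0X\setminus X\}$ (Remark $3.3$ in the paper's numbering, not $3.4$), and the reverse inclusion is obtained by observing that $cl_{\beta_0X}Z_c(f)$ is a neighbourhood of every point of $\beta_0X\setminus X$ and then invoking Theorem $3.4$. The only difference is that you make explicit the (trivial but worth stating) passage from ``neighbourhood of each point'' to ``neighbourhood of the set'' via the union of the witnessing open sets, which the paper leaves implicit.
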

\begin{proof}
	Let $f\in O^{p,F}_c$ for each $p\in \beta_0X\texttt{\textbackslash}X$. Then $cl_{\beta_0X}Z_c(f)$ is a neighbourhood of each point of $\beta_0X\texttt{\textbackslash}X$ in the space $\beta_0X$. It follows from Theorem $3.4$ that $f\in C^c_K(X,F)$. Thus $\bigcap\{O^{p,F}_c:p\in \beta_0X\texttt{\textbackslash}X\}\subset C^c_K(X,F)$. The reversed implication relation is already realized in Remark $3.3$. Hence $C^c_K(X,F)=\bigcap\{O^{p,F}_c:p\in \beta_0X\texttt{\textbackslash}X\}$.
\end{proof}
\section{Von Neumann regularity of $C_c(X,F)$ versus $P-$space $X$}
We recall from \cite{SABM2004} that $X$ is called $P_F-$space if $C(X,F)$ is a Von-Neumann regular ring. By borrowing the terminology from \cite{RS2013}, we call a zero-dimensional space $X$, a countably $P_F-$space or $CP_F-$space if $C_c(X,F)$ is Von-Neumann regular ring. Thus in this terminology, $CP_\mathbb{R}-$spaces are precisely $CP-$spaces introduced in \cite{RS2013}, Definition $5.1$. It is still undecided whether there exist an ordered field $F$ and a zero-dimensional space $X$ for which $X$ is a $P_F-$space without being a $P-$space (see the comments preceding Definition $3.3$ in \cite{SABM2004}). However, we shall prove that subject to the restrictions imposed on the field $F$, already used several times in this paper, $CP_F-$spaces and $P-$spaces are one and the same. We want to mention in this context that the zero set of a function $f$ in $C(X,F)$ may not be a $G_{\delta}-$set [see Theorem $2.2$ in \cite{SABM2004}]. In contrast, we shall show that the zero set of a function lying in $C_c(X,F)$ is necessarily a $G_{\delta}-$set. Before proceeding further, we make the assumption throughout the rest of this article that the ordered field $F$ is either uncountable or a countable subfield of $\mathbb{R}$.
\begin{theorem}
	A zero set $Z\in Z_c(X,F)$ is a $G_{\delta}-$set.
\end{theorem}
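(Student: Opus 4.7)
The plan is to exploit the countability of $f(X)$ directly, giving a uniform argument that works for both types of field $F$ simultaneously. The key observation is that $G_\delta$-ness of $Z_c(f)$ is equivalent to the complement $X\setminus Z_c(f)$ being $F_\sigma$, and the countability of the range makes this transparent.

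Given $f\in C_c(X,F)$, write $Z = Z_c(f)$. Since $F$ is totally ordered and equipped with the interval topology, $F$ is Hausdorff, so each singleton $\{\alpha\}\subset F$ is closed, and hence $F\setminus\{\alpha\}$ is open. By continuity, for every $\alpha\in F$ the set $f^{-1}(F\setminus\{\alpha\})$ is open in $X$. I would then verify the set-theoretic identity
\[
Z_c(f) \;=\; \bigcap_{\alpha\in f(X)\setminus\{0\}} f^{-1}\bigl(F\setminus\{\alpha\}\bigr),
\]
which is immediate: an element $x\in X$ lies in the right-hand side iff $f(x)\neq \alpha$ for every $\alpha\in f(X)\setminus\{0\}$, and since $f(x)$ must itself belong to $f(X)$, this forces $f(x)=0$. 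Because $f(X)$ is countable by hypothesis, the index set $f(X)\setminus\{0\}$ is countable, and the display above exhibits $Z_c(f)$ as a countable intersection of open sets, i.e.\ as a $G_\delta$-set.

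The degenerate cases need only a line: if $f\equiv 0$ then $Z_c(f)=X$ is open, and if $f(X)\setminus\{0\}$ is finite the intersection above is finite and $Z_c(f)$ is already open, hence trivially $G_\delta$.

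I do not anticipate any real obstacle here; the only point requiring a moment's care is that the argument must not use any feature of $F$ beyond the order topology being Hausdorff, because the blanket assumption on $F$ (uncountable or a countable subfield of $\mathbb{R}$) permits non-Archimedean behaviour where one cannot write $Z_c(f)=\bigcap_n f^{-1}(-1/n,1/n)$ in the familiar classical way. Routing the argument through the countability of the \emph{range} of $f$, rather than through a shrinking sequence of neighbourhoods of $0$ in $F$, sidesteps this issue entirely.
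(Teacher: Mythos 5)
Your proof is correct and follows essentially the same route as the paper's: both exploit the countability of $f(X)$ to write $Z_c(f)$ as a countable intersection of open preimages indexed by the nonzero values of $f$. The paper first reduces to $f\ge 0$ and intersects the sets $f^{-1}(-r_n,r_n)$, whereas you intersect the sets $f^{-1}(F\setminus\{r_n\})$; this is a cosmetic variation, with the minor advantage on your side of not needing the reduction to $f\ge 0$ and of using nothing about $F$ beyond the order topology being $T_1$.
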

\begin{proof}
	We can write $Z=Z_c(f)$ for some $f\geq 0$ in $C_c(X,F)$. Since $f(X)$ is a countable subset of $F$, we can write, $f(X)\texttt{\textbackslash}\{0\}=\{r_1,r_2,...,r_n,...\}$; a countable set in $F^+$. It follows that $Z_c(f)=\bigcap_{n=1}^\infty f^{-1}(-r_n,r_n)=$ a $G_{\delta}-$set in $X$.
\end{proof}
The following results are generalized versions of Proposition $4.3$, Theorem $5.5$ and Corollary $5.7$ in \cite{RS2013}.
\begin{theorem}
	If $A$ and $B$ are disjoint closed sets in $X$ with $A$, compact, then there exists $f\in C_c(X,F)$ such that $f(A)=\{0\}$ and $f(B)=\{1\}$
\end{theorem}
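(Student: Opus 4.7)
The plan is to imitate the classical zero-dimensional separation argument for a compact closed set from a disjoint closed set, and then verify that the resulting $\{0,1\}$-valued function automatically lies in $C_c(X,F)$ because its range is finite.

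First, I would exploit zero-dimensionality pointwise on $A$. Fix $a\in A$. Since $a\notin B$ and $B$ is closed in $X$, and since $X$ is zero-dimensional Hausdorff (the blanket hypothesis in force since Section $2$), the clopen subsets of $X$ form a base for the topology, so I can choose a clopen set $U_a$ in $X$ with $a\in U_a$ and $U_a\cap B=\emptyset$. The family $\{U_a:a\in A\}$ is then an open cover of $A$.

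Next, I would invoke the compactness of $A$ to extract a finite subcover $U_{a_1},\dots,U_{a_n}$ and form $U=U_{a_1}\cup\cdots\cup U_{a_n}$. A finite union of clopen sets is clopen, so $U$ is a clopen subset of $X$ containing $A$ and disjoint from $B$; in particular $B\subset X\setminus U$, and $X\setminus U$ is also clopen.

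Finally, I would define $f\colon X\to F$ by $f(x)=0$ for $x\in U$ and $f(x)=1$ for $x\in X\setminus U$. Continuity is immediate because $f^{-1}(\{0\})=U$ and $f^{-1}(\{1\})=X\setminus U$ are both clopen (and every subset of the two-point image $\{0,1\}\subset F$ is trivially open in the subspace topology). The range $f(X)\subset\{0,1\}$ is finite, hence countable, so $f\in C_c(X,F)$, and by construction $f(A)=\{0\}$, $f(B)=\{1\}$.

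The only place where anything subtle could go wrong is step one, namely producing a single clopen set $U_a$ around each point of $A$ missing the whole closed set $B$; this is exactly the content of zero-dimensionality (equivalently, condition $(3)$ of Theorem $2.9$, which guarantees that $Z_c(X,F)$ is a base for the closed sets), so no further machinery beyond what has already been established is required. No appeal to the countability/uncountability hypothesis on $F$ is needed here, since the constructed $f$ takes only two values.
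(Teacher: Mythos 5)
Your proof is correct. The paper itself gives no argument for this theorem---it only remarks that it is a generalized version of Proposition $4.3$ in \cite{RS2013}---and your argument (clopen neighbourhoods from zero-dimensionality, a finite subcover from compactness of $A$, and the characteristic function of the resulting clopen set, which lies in $C_c(X,F)$ since its range is finite) is exactly the standard proof that the authors are implicitly invoking; you are also right that no hypothesis on $F$ beyond being a totally ordered field is needed here.
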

\begin{theorem}
	For $f\in C_c(X,F)$, $Z_c(f)$ is a countable intersection of clopen sets in $X$
\end{theorem}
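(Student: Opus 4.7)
The plan is to adapt the proof of Theorem $4.1$ so that the representation of $Z_c(f)$ as a countable intersection is achieved with clopen rather than merely open sets. The key input is the gap-finding idea from Theorem $2.17$: between $0$ and any positive value assumed by $|f|$, one can select a threshold $s$ with $\pm s\notin f(X)$, and then the level set $\{x:|f(x)|<s\}$ coincides with $\{x:|f(x)|\leq s\}$, making it clopen.

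Concretely, I would first replace $f$ by $|f|\in C_c(X,F)$ (noting $Z_c(f)=Z_c(|f|)$) and enumerate $|f|(X)\setminus\{0\}=\{t_1,t_2,\ldots\}$, which is possible since $f(X)$ is countable. For each $n$ I must pick $s_n$ with $0<s_n<t_n$ and $\pm s_n\notin f(X)$. When $F$ is uncountable, the argument of Theorem $2.17$ shows that $(0,t_n)\subset F$ is uncountable, so $(0,t_n)\setminus|f|(X)$ is nonempty and any of its elements serves. When $F$ is a countable subfield of $\mathbb{R}$, the interval $(0,t_n)\subset\mathbb{R}$ is uncountable, so I can select $s_n\in(0,t_n)\setminus F$; the inclusion $f(X)\subset F$ then automatically yields $\pm s_n\notin f(X)$. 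Define $U_n=\{x\in X:|f(x)|<s_n\}$; by the choice of $s_n$ this equals $\{x\in X:|f(x)|\leq s_n\}$, so $U_n$ is simultaneously the preimage under $|f|$ of an open interval and of a closed interval, hence clopen in $X$.

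It remains to verify $Z_c(f)=\bigcap_{n=1}^\infty U_n$. The inclusion $\subseteq$ is immediate since $|f(x)|=0<s_n$ whenever $f(x)=0$. Conversely, if $x\in\bigcap_n U_n$ and $f(x)\neq 0$, then $|f(x)|=t_k$ for some $k$; but $x\in U_k$ forces $t_k=|f(x)|<s_k<t_k$, a contradiction. The only subtle point is the countable subfield case, where $s_n$ need not lie in $F$: this causes no trouble because $s_n$ is used only as a comparison threshold in the order of $\mathbb{R}$, and clopen-ness of $U_n$ depends solely on $\pm s_n\notin f(X)$, not on membership of $s_n$ in $F$.
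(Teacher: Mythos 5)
Your proof is correct and follows essentially the same route as the paper: choose for each nonzero value $t_n$ of $|f|$ a threshold $s_n\in(0,t_n)$ missing the range (from within $F$ when $F$ is uncountable, an irrational when $F$ is a countable subfield of $\mathbb{R}$), so that the strict and non-strict sublevel sets coincide and are clopen, and their intersection is $Z_c(f)$. The only difference is that you spell out the final set equality and the clopen-ness verification, which the paper leaves implicit.
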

\begin{proof}
	As in the proof of Theorem $4.1$, we can assume $f\geq 0$ and $f(X)\texttt{\textbackslash}\{0\}=\{r_1, r_2,...,r_n,...\}$. Now if $F$ is an uncountable ordered field, then for each $n\in\mathbb{N}$, we can choose $s_n\in F$ such that $0<s_n<r_n$ and $s_n\notin\{r_1, r_2,...,r_n,...\}$. On the other hand, if $F$ is a countable subfield of $\mathbb{R}$, then we can pick up for each $n\in\mathbb{N}$ an irrational point denoted by the same symbol $s_n$ with the above-mentioned condition, i.e., $0<s_n<r_n$ and $s_n\notin\{r_1, r_2,...,r_n,...\}$. It follows that $Z_c(f)=\bigcap_{n=1}^\infty f^{-1}(-s_n,s_n)=\bigcap_{n=1}^\infty f^{-1}[-s_n,s_n]=$ a countable intersection of clopen sets in $X$.
\end{proof}
\begin{theorem}
	A countable intersection of clopen sets in $X$ is a zero set in $Z_c(X,F)$ (equivalently, a countable union of clopen sets in $X$ is a co-zero set, i.e., the complement in $X$ of a zero set in $Z_c(X,F)$).
\end{theorem}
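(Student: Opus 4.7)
The plan is to build, for any sequence $\{U_n\}_{n\in\mathbb{N}}$ of clopen sets in $X$, a function $f\in C_c(X,F)$ with $Z_c(f)=\bigcap_n U_n$. The recipe is to disjointify the complementary clopen sets $V_n=X\setminus U_n$ and assign $f$ a distinct value from a positive null sequence in $F$ on each piece, while setting $f$ equal to $0$ on the target intersection.

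Concretely, I would let $V'_n=V_n\cap U_1\cap\cdots\cap U_{n-1}$ for each $n\geq 1$. Each $V'_n$ is a finite intersection of clopen sets, hence clopen; the $V'_n$ are pairwise disjoint; and $\bigcup_n V'_n=\bigcup_n V_n=X\setminus\bigcap_n U_n$. Then define $f:X\to F$ by $f(x)=1/n$ whenever $x\in V'_n$ and $f(x)=0$ for $x\in\bigcap_n U_n$. This is well-defined, $f(X)\subseteq\{0\}\cup\{1/n:n\in\mathbb{N}\}$ is countable, and $Z_c(f)=\bigcap_n U_n$ by construction.

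The remaining task is continuity of $f$. At a point $x\in V'_n$ this is immediate, as $V'_n$ is a clopen neighborhood of $x$ on which $f$ is constant. At a point $x\in\bigcap_n U_n$, given any $\epsilon>0$ in $F$, I would select $N$ with $1/N<\epsilon$ and set $O=X\setminus(V'_1\cup\cdots\cup V'_{N-1})$. Then $O$ is clopen, contains $x$ (which lies in none of the $V'_k$), and the values of $f$ on $O$ lie in $\{0\}\cup\{1/k:k\geq N\}\subseteq(-\epsilon,\epsilon)$. Thus $f\in C_c(X,F)$, so $\bigcap_n U_n\in Z_c(X,F)$, and the equivalent statement about unions being cozero sets follows on taking complements.

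The single delicate step is the choice of $N$ with $1/N<\epsilon$, which is precisely the Archimedean property: this is automatic when $F$ is a subfield of $\mathbb{R}$ and, more generally, when $F$ is Archimedean of any cardinality. In the uncountable non-Archimedean situation one would have to replace $\{1/n\}$ throughout by any positive sequence in $F$ that is order-topologically null, chosen in advance in the same spirit as the auxiliary sequence $\{s_n\}$ used in the proof of Theorem $4.3$.
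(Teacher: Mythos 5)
Your construction is essentially the paper's own proof: the paper likewise disjointifies the clopen pieces and sets $h=\sum_{n}e_n/3^n$, i.e., assigns the constant value $1/3^n$ on the $n$-th piece and $0$ on the residual set, exactly as you assign $1/n$ on $V'_n$, and your disjointification $V'_n=V_n\cap U_1\cap\cdots\cap U_{n-1}$ is the standard argument the paper invokes in its first sentence. The one delicate point you flag --- that continuity at points of $\bigcap_n U_n$ requires the chosen nonzero values to form a null sequence in the order topology of $F$, which is automatic only in the Archimedean case --- is present but passed over in silence in the paper's proof as well, since $1/3^n\to 0$ can fail in an uncountable non-Archimedean $F$ (a case admitted by the section's blanket hypothesis), so your version is, if anything, the more careful of the two.
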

\begin{proof}
	Since in any topological space, a countable union of clopen sets can be expressed as a countable union of pairwise disjoint clopen sets, we can start with a countable family $\{L_i\}^\infty_{i=1}$ of pairwise disjoint clopen sets in $X$. For each $n\in\mathbb{N}$, define a function $e_n:X\to F$ as follows : $e_n(L_n)=\{1\}$ and $e_n(X\texttt{\textbackslash}L_n)=\{0\}$. Then $e_n\in C_c(X,F)$ and is an idempotent in this ring. Furthermore, it is easy to see that if $m\neq n$, then $e_m.e_n=0$. Let $h(x)=\sum_{n=1}^\infty\frac{e_n(x)}{3^n},~x\in X$. Then $h:X\to F$ is a continuous function and $h(X)\subset \{0, \frac{1}{3}, \frac{1}{3^2},...\}$. Thus $h\in C_c(X,F)$. It is clear that $\bigcup_{n=1}^\infty L_n=X\texttt{\textbackslash}Z_c(h)$.
\end{proof}
\begin{theorem}
	$Z_c(X,F)$ is closed under countable intersection.
\end{theorem}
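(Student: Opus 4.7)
The plan is to combine Theorems $4.3$ and $4.4$ directly; in fact the theorem is an immediate corollary of those two results together with the fact that a countable union of countable sets is countable. Let me describe how I would arrange the argument.

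First, I would take an arbitrary countable family $\{Z_n\}_{n=1}^{\infty}$ of members of $Z_c(X,F)$ and aim to produce an $h \in C_c(X,F)$ with $Z_c(h) = \bigcap_{n=1}^{\infty} Z_n$. Rather than construct $h$ directly from the $Z_n$'s (which would require a delicate series-type construction analogous to the one used in Theorem $4.4$), I would pass through the characterization of zero sets as countable intersections of clopen sets. Specifically, by Theorem $4.3$, for each $n \in \mathbb{N}$ there exists a countable family $\{K_{n,m}\}_{m=1}^{\infty}$ of clopen subsets of $X$ such that $Z_n = \bigcap_{m=1}^{\infty} K_{n,m}$.

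Next I would observe that $\bigcap_{n=1}^{\infty} Z_n = \bigcap_{n=1}^{\infty} \bigcap_{m=1}^{\infty} K_{n,m}$, which is a countable intersection indexed by $\mathbb{N} \times \mathbb{N}$ of clopen subsets of $X$. Reindexing this doubly-indexed family by any bijection $\mathbb{N} \to \mathbb{N} \times \mathbb{N}$, I obtain $\bigcap_{n=1}^{\infty} Z_n$ as a (singly-indexed) countable intersection of clopen sets in $X$. Applying Theorem $4.4$ to this family then yields that $\bigcap_{n=1}^{\infty} Z_n \in Z_c(X,F)$, completing the proof.

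There isn't really a genuine obstacle here, since the theorem is essentially a bookkeeping consequence of the previous two results: the heavy lifting (producing an explicit $f \in C_c(X,F)$ from a countable family of pairwise disjoint clopen sets via the series $\sum e_n/3^n$) was already accomplished in the proof of Theorem $4.4$. The only thing to double-check is the trivial fact that a countable intersection of countable intersections is again a countable intersection, which is just countability of $\mathbb{N} \times \mathbb{N}$.
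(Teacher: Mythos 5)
Your argument is exactly the paper's: the authors prove Theorem $4.5$ by citing Theorems $4.3$ and $4.4$, and your write-up simply fills in the (routine) reindexing of the doubly-indexed family of clopen sets via the countability of $\mathbb{N}\times\mathbb{N}$. The proposal is correct and takes essentially the same approach as the paper.
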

\begin{proof}
	Follows from Theorem $4.3$ and Theorem $4.4$.
\end{proof}
\begin{theorem}
	Suppose a compact set $K$ in $X$ is contained in a $G_\delta-$set $G$. Then there exists a zero set $Z$ in $Z_c(X,F)$ such that $K\subset Z\subset G$.
\end{theorem}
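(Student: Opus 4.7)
The plan is to exploit zero-dimensionality to sandwich $K$ between clopen sets lying inside each layer of the $G_\delta$ representation of $G$, and then invoke Theorem $4.4$ to turn the resulting countable intersection of clopen sets into a zero set in $Z_c(X,F)$.

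More concretely, first I would write $G=\bigcap_{n=1}^\infty U_n$ with each $U_n$ open in $X$. For each fixed $n$, I want to produce a clopen set $W_n$ with $K\subset W_n\subset U_n$. To do this, note that by Theorem $2.9$ the family $Z_c(X,F)$ is a base for closed sets, so $X$ has a base of clopen sets; thus for every $x\in K\subset U_n$ I can choose a clopen neighbourhood $V_{x,n}$ of $x$ contained in $U_n$. The family $\{V_{x,n}:x\in K\}$ is an open cover of the compact set $K$, so it admits a finite subcover $V_{x_1,n},\ldots,V_{x_{k_n},n}$, and I set $W_n=\bigcup_{i=1}^{k_n}V_{x_i,n}$. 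Being a finite union of clopen sets, $W_n$ is clopen, and by construction $K\subset W_n\subset U_n$.

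Now put $Z=\bigcap_{n=1}^\infty W_n$. Then $K\subset Z\subset\bigcap_{n=1}^\infty U_n=G$, so $Z$ sits where we want it. Moreover, $Z$ is a countable intersection of clopen sets in $X$, hence by Theorem $4.4$ it lies in $Z_c(X,F)$. This completes the proof.

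There is essentially no obstacle beyond the routine clopen-covering step for compact subsets of a zero-dimensional space; the real content has already been packaged into Theorem $4.4$, which converts a countable intersection of clopens into a member of $Z_c(X,F)$. One should just be careful to quote Theorem $2.9$ (equivalently, the standing zero-dimensional Hausdorff hypothesis on $X$) to justify that clopen sets form a base, so that the initial choice of $V_{x,n}\subset U_n$ is legitimate.
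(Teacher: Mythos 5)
Your proof is correct, but it follows a slightly different path from the paper's. The paper writes $G=\bigcap_n W_n$ and, for each $n$, applies Theorem $4.2$ to the disjoint pair ($K$ compact, $X\setminus W_n$ closed) to get $g_n\in C_c(X,F)$ with $K\subset Z_c(g_n)\subset W_n$; it then invokes Theorem $4.5$ (closure of $Z_c(X,F)$ under countable intersections) to write $\bigcap_n Z_c(g_n)$ as a single zero set. You instead inline the underlying topology: the clopen base of a zero-dimensional space plus compactness of $K$ gives clopen sets $W_n$ with $K\subset W_n\subset U_n$, and a single application of Theorem $4.4$ turns $\bigcap_n W_n$ into a zero set. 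The two arguments are close cousins --- Theorem $4.2$ is itself proved by exactly your finite-subcover-of-clopens trick --- but yours is leaner in its dependencies: you bypass Theorem $4.2$ entirely and need only Theorem $4.4$ rather than Theorem $4.5$ (which in the paper also leans on Theorem $4.3$, and hence on the restriction that $F$ be uncountable or a countable subfield of $\mathbb{R}$). The paper's route has the mild advantage of reusing already-stated separation machinery and of producing the intermediate zero sets $Z_c(g_n)$ explicitly, but your version is self-contained and arguably more elementary. One small point of hygiene: zero-dimensionality is the standing hypothesis and already means that clopen sets form a base, so you can cite that directly rather than routing through Theorem $2.9$.
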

\begin{proof}
	We can write $G=\bigcap_{n=1}^\infty W_n$ where each $W_n$ is open in $X$. For each $n\in\mathbb{N}$, $K$ and $X\texttt{\textbackslash}W_n$ are disjoint closed sets in $X$ with $K$ compact. Hence by Theorem $4.2$, there exists $g_n\in C_c(X,F)$ such that $g_n(K)=\{0\}$ and $g_n(X\texttt{\textbackslash}W_n)=\{1\}$. It follows that $K\subset Z_c(g_n)\subset W_n$ for each $n\in\mathbb{N}$. Consequently, $K\subset\bigcap_{n=1}^\infty Z_c(g_n)\subset G$. But by Theorem $4.5$, we can write $\bigcap_{n=1}^\infty Z_c(g_n)=Z_c(g)$ for some $g\in C_c(X,F)$. Hence $K\subset Z_c(g)\subset G$.
\end{proof}
Before giving several equivalent descriptions of the defining property of $CP_F-$space in the manner $4J$ of \cite{GJ} and the theorem $5.8$ in \cite{RS2013}. We like to introduce a suitable modified countable version of $\mathscr{m}-$topology on $C(X)$ as dealt with in $2N$, \cite{GJ}.\\
For each $g\in C_c(X,F)$ and a positive unit $u$ in this ring, set $M_F(g,u)=\{f\in C_c(X,F):|f(x)-g(x)|<u(x)~for~each~x\in X\}$. Then it can be proved by routine computation that $\{M_F(g,u):g\in C_c(X,F),~u,~a~\\positive~unit~in~C_c(X,F)\}$ is an open set for some topology on $C_c(X,F)$, which we call $\mathscr{m}_c^F-$topology on $C_c(X,F)$. A special case of this topology with $F=\mathbb{R}$ is already considered in \cite{QM2020}, Section $3$. The following two can be established by making straight forward modifications in the arguments adopted to prove Theorem $3.1$ and Theorem $3.7$ in \cite{QM2020}.
\begin{theorem}
	Each maximal ideal in $C_c(X,F)$ is closed in the $\mathscr{m}_c^F$-topology.
\end{theorem}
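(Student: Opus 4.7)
The plan is to adapt the classical proof that maximal ideals in $C(X)$ are closed in the $m$-topology, following the blueprint of Theorem $3.1$ in \cite{QM2020}. Given a maximal ideal $M$ in $C_c(X,F)$, I will show that its complement is open in the $\mathscr{m}_c^F$-topology: fix an arbitrary $f\in C_c(X,F)\setminus M$, and aim to produce a positive unit $u\in C_c(X,F)$ such that $M_F(f,u)\cap M=\emptyset$.

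The key observation is that, by Theorem $2.4(2)$, $M$ is a $Z_{F_c}$-ideal whose image $Z_{F,C}[M]$ is a $Z_{F_c}$-ultrafilter on $X$, and $f\notin M$ is equivalent to $Z_c(f)\notin Z_{F,C}[M]$. The standard ultrafilter trick (if $Z_{F,C}[M]\cup\{Z_c(f)\}$ had the finite intersection property, Remark $2.3$ would extend it to a $Z_{F_c}$-ultrafilter properly containing $Z_{F,C}[M]$, contradicting maximality) produces $g\in M$ with $Z_c(f)\cap Z_c(g)=\emptyset$. I then take $u:=(|f|+|g|)/2$; this lies in $C_c(X,F)$ because absolute values preserve countability of range and the pointwise sum of two countably-ranged functions is countably-ranged, it is strictly positive everywhere on $X$ because $Z_c(f)$ and $Z_c(g)$ are disjoint, and it is therefore a positive unit in $C_c(X,F)$ (its reciprocal is continuous and countably-ranged).

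To verify $M_F(f,u)\cap M=\emptyset$, I will take $h\in C_c(X,F)$ with $|h(x)-f(x)|<u(x)$ for every $x\in X$. At any $x\in Z_c(g)$ we have $u(x)=|f(x)|/2$ and $f(x)\neq 0$, so the reverse triangle inequality yields $|h(x)|>|f(x)|/2>0$. Hence $Z_c(h)\cap Z_c(g)=\emptyset$; since $Z_c(g)\in Z_{F,C}[M]$ and no filter contains $\emptyset$, this forces $Z_c(h)\notin Z_{F,C}[M]$, and invoking once more that $M$ is a $Z_{F_c}$-ideal, $h\notin M$, as required.

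The argument is largely bookkeeping; the only subtlety I anticipate is confirming that $(|f|+|g|)/2$ is genuinely a unit in $C_c(X,F)$ — which reduces to noting that $2$ is invertible in any ordered field $F$ and that a nowhere-vanishing element of $C_c(X,F)$ automatically has its reciprocal in $C_c(X,F)$. Notably the blanket restriction on $F$ (uncountable or a countable subfield of $\mathbb{R}$) plays no role in this particular proof, which is consistent with the excerpt's remark that the result follows by a straightforward modification of the argument in \cite{QM2020}.
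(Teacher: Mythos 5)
Your proof is correct and is essentially the argument the paper intends: the paper gives no proof of its own but defers to Theorem 3.1 of \cite{QM2020}, and your adaptation (use the $Z_{F_c}$-ultrafilter $Z_{F,C}[M]$ to find $g\in M$ with $Z_c(f)\cap Z_c(g)=\emptyset$, then take the positive unit $u=(|f|+|g|)/2$ as the radius of a neighbourhood of $f$ missing $M$) is exactly the straightforward modification being alluded to. Your closing observations — that $2$ is invertible, that $|f|,|g|\in C_c(X,F)$ since it is a sublattice with countable ranges preserved, that a nowhere-vanishing member of $C_c(X,F)$ is a unit because inversion is continuous on $F\setminus\{0\}$ in the order topology, and that the blanket hypothesis on $F$ is not needed here — are all accurate.
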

\begin{theorem}
	For any ideal $I$ in $C_c(X,F)$, its closure in $\mathscr{m}_c^F-$\\topology is given by : $\overline{I}=\bigcap\{M^{p,F}_c:p\in\beta_0X~and~M^{p,F}_c\supset I\}\equiv$ the intersection of all the maximal ideals in $C_c(X,F)$ which contains $I$.
\end{theorem}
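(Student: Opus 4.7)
The plan is to prove the two inclusions separately. The inclusion $\overline{I} \subseteq \bigcap\{M^{p,F}_c : p \in \beta_0 X,\ I \subseteq M^{p,F}_c\}$ follows at once from Theorem $4.7$: every maximal ideal $M^{p,F}_c$ that contains $I$ is closed in the $\mathscr{m}_c^F$-topology, and hence also contains the closure $\overline{I}$.

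For the reverse inclusion, I would fix $f \in C_c(X,F)$ belonging to every maximal ideal containing $I$, together with an arbitrary positive unit $u \in C_c(X,F)$, and construct $g \in I$ with $|f(x)-g(x)| < u(x)$ for every $x \in X$. First, adapting the idea in the proof of Theorem $2.17$, I would select $r \in (0,1)$ with $r \notin (|f|/u)(X)$; such $r$ exists because $(|f|/u)(X)$ is countable while $(0,1)$ is uncountable either in $F$ (if $F$ is uncountable) or in $\mathbb{R}$ (if $F$ is a countable subfield of $\mathbb{R}$). Then $W := \{x \in X : |f(x)| < r\,u(x)\}$ is clopen in $X$, and $|f| < r u < u$ on $W$. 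Its $\{0,1\}$-valued characteristic function lies in $C_c(X,F)$ and, by the $C$-extension property of $\beta_0 X$ (Theorems $2.15$ and $2.18$), extends to a continuous idempotent on $\beta_0 X$; this yields a clopen partition $\beta_0 X = cl_{\beta_0X}W \sqcup cl_{\beta_0X}(X\setminus W)$.

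Next, using Remark $2.21$ I would translate the hypothesis on $f$ into the set-theoretic assertion $V := \{p \in \beta_0 X : I \subseteq M^{p,F}_c\} \subseteq cl_{\beta_0X}Z_c(f)$; since $Z_c(f) \subseteq W$, this forces $V \subseteq cl_{\beta_0X}W$ and so $V \cap cl_{\beta_0X}(X\setminus W) = \emptyset$. Therefore, for each $p \in cl_{\beta_0X}(X\setminus W)$ I can pick $g_p \in I \setminus M^{p,F}_c$, equivalently a clopen neighbourhood $K_p \ni p$ in $\beta_0 X$ with $K_p \cap cl_{\beta_0X}Z_c(g_p) = \emptyset$. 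Compactness of $cl_{\beta_0X}(X\setminus W)$ then supplies a finite subcover $K_{p_1},\ldots,K_{p_n}$; the traces $L_i := K_{p_i} \cap X$ are clopen in $X$, cover $X\setminus W$, and the function $g_{p_i}$ is non-vanishing on $L_i$. After disjointifying via $L_i \mapsto L_i \setminus \bigcup_{j<i} L_j$, I would set $h_i := (f/g_{p_i})\chi_{L_i} \in C_c(X,F)$ (well-defined since $g_{p_i}$ does not vanish on $L_i$) and take $g := \sum_{i=1}^{n} h_i g_{p_i} \in I$. A direct verification shows that $g = f$ on $\bigcup_i L_i \supseteq X\setminus W$ and $g = 0$ off $\bigcup_i L_i$ (which lies inside $W$); hence $|f-g| = 0$ off $W$ and $|f-g| = |f| < u$ on the remaining part of $W$, yielding the required approximation.

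The step I expect to be the main obstacle is the disjointness $V \cap cl_{\beta_0X}(X\setminus W) = \emptyset$: this is the precise point where the algebraic hypothesis ``$f$ belongs to every maximal ideal containing $I$'' is converted, via Remark $2.21$ and the extension of the $W$-indicator to $\beta_0 X$, into the topological data needed for the finite-cover argument. Once this disjointness is secured, the rest of the construction is a clopen/idempotent patching that runs in close parallel with the proof of Theorem $3.7$ in \cite{QM2020}, with Theorem $2.17$ playing the role of the real Urysohn-type constructions used there.
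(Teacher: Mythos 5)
Your argument is correct and is, in essence, the proof the paper leaves implicit by deferring to Theorem $3.7$ of \cite{QM2020} (itself modelled on $7Q2$ of \cite{GJ}): closedness of maximal ideals gives one inclusion, and the other follows from a compactness-plus-idempotent patching over $\beta_0X$ after cutting $X$ along a value $r\notin(|f|/u)(X)$ supplied by the argument of Theorem $2.17$. The only steps worth writing out explicitly are that the sets $M_F(f,u)$ form a neighbourhood base at $f$ (so it suffices to test these), and that $f/g_{p_i}$ restricted to the clopen set $L_i$ is continuous with countable range; both are routine.
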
\vspace{-2mm}
[compare with $7Q2$, \cite{GJ}]
\begin{theorem}
	An ideal $I$ in $C_c(X,F)$ is closed in $\mathscr{m}_c^F-$topology if and only if it is the intersection of all the maximal ideals in this ring which contains $I$.
\end{theorem}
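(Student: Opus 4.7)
The plan is to derive Theorem 4.9 as an immediate corollary of Theorem 4.8, which already computes the $\mathscr{m}_c^F$-closure of any ideal $I$ in $C_c(X,F)$ as the intersection of all the maximal ideals of $C_c(X,F)$ that contain $I$. By Remark 2.21, every maximal ideal of $C_c(X,F)$ is of the form $M^{p,F}_c$ for some $p\in\beta_0X$, so the family indexed by $\{p\in\beta_0X:M^{p,F}_c\supset I\}$ appearing in Theorem 4.8 is identical to the family of all maximal ideals containing $I$. Once this identification is recorded, the biconditional reduces to the tautology that a subset of a topological space equals its closure if and only if it is closed.

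For the forward direction I would assume that $I$ is $\mathscr{m}_c^F$-closed, so that $I=\overline{I}$ by definition of closedness, and then invoke Theorem 4.8 to rewrite $\overline{I}$ as $\bigcap\{M^{p,F}_c:p\in\beta_0X,\ M^{p,F}_c\supset I\}$, which is the desired representation of $I$ as an intersection of maximal ideals.

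For the converse I would assume the representation $I=\bigcap\{M^{p,F}_c:p\in\beta_0X,\ M^{p,F}_c\supset I\}$ holds, and again apply Theorem 4.8 to recognise the right-hand side as $\overline{I}$. The equality $I=\overline{I}$ then gives that $I$ is closed in the $\mathscr{m}_c^F$-topology.

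I do not expect any genuine obstacle: the real content has already been built into Theorem 4.8, and Theorem 4.9 is essentially a repackaging. The only small point worth mentioning is that the standing convention in this paper is that an ``ideal'' means a proper ideal, so Zorn's lemma guarantees that the collection $\{M^{p,F}_c:M^{p,F}_c\supset I\}$ is non-empty; hence the intersection on the right-hand side is well-defined and is itself a proper ideal, and the formal manipulation above makes sense.
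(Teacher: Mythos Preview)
Your proposal is correct and mirrors the paper's own treatment: the paper simply notes that Theorem~4.9 ``follows immediately from Theorem~4.8,'' i.e., exactly the observation that $I$ is closed if and only if $I=\overline{I}$, together with the formula for $\overline{I}$ already established. Your additional remarks (the identification via Remark~2.21 and the nonemptiness of the family of maximal ideals containing $I$) are fine clarifications but not needed beyond what the paper uses.
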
\vspace{-2mm}
[This follows immediately from Theorem $4.8$]\\
We are now ready to offer a bunch of statements, each equivalent to the requirement that $X$ is a $CP_F-$space.
\begin{theorem}
	Let $X$ be a zero-dimensional Hausdorff space and $F$, a totally ordered field with the property mentioned in the beginning of this section. Then the following statements are equivalent : 
	\begin{enumerate}
		\item $X$ is a $CP_F-$space.
		\item Each zero set in $Z_c(X,F)$ is open.
		\item Each ideal in $C_c(X,F)$ is a $Z_{F_c}-$ideal.
		\item For all $f,g$ in $C_c(X,F)$, $<f,g>=<f^2+g^2>$.
		\item Each prime ideal in $C_c(X,F)$ is maximal.
		\item For each $p\in X$, $M^c_{p,F}=O^c_{p,F}$.
		\item For each $p\in\beta_0X$, $M^{p,F}_c=O^{p,F}_c$.
		\item Each ideal in $C_c(X,F)$ is the intersection of all the maximal ideals containing it.
		\item Each $G_\delta-$set in $X$ is open (which eventually tells that $X$ is a $P-$space)
		\item Every ideal in $C_c(X,F)$ is closed in the $\mathscr{m}^F_c-$topology. 
	\end{enumerate}
\end{theorem}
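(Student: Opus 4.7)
The plan is to organize the ten conditions into three clusters linked to the hub $(1)$, rather than weave them into a single long cycle. The algebraic cluster comprises $(4), (5), (8), (10)$; the zero-set cluster comprises $(2), (3), (9)$; the structure-space cluster comprises $(6), (7)$. Each cluster will be closed against $(1)$ independently.

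\textbf{Algebraic cluster.} The equivalence $(1)\Leftrightarrow(4)$ is the quickest: specialising $g=f$ in $(4)$ gives $\langle f\rangle=\langle 2f^2\rangle=\langle f^2\rangle$ (with $2$ a unit since $F$ has characteristic zero), so $f=f^2 h$; conversely, Von Neumann regularity applied to $f^2+g^2$ produces an idempotent $e=(f^2+g^2)h$ for which a pointwise check forces $ef=f$ and $eg=g$, placing $f,g\in\langle f^2+g^2\rangle$. $(1)\Leftrightarrow(5)$ is the classical fact that a reduced commutative ring is Von Neumann regular iff it has Krull dimension zero; the direction $(5)\Rightarrow(1)$ proceeds by verifying $aR+\mathrm{Ann}(a)=R$ for every $a$, whose failure would place $a$ and $\mathrm{Ann}(a)$ inside a common maximal ideal $M$, contradicting that $R_M$ is a field (dimension zero plus reducedness). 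For $(1)\Leftrightarrow(8)$: in a Von Neumann regular ring every ideal is radical, and every radical ideal in a Gelfand ring equals the intersection of maximals containing it, while conversely, if every ideal is so expressed, a prime $P$ equals the unique maximal above it (Gelfand property, Theorem 2.6). Finally $(8)\Leftrightarrow(10)$ is exactly Theorem 4.9.

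\textbf{Zero-set cluster.} For $(1)\Leftrightarrow(2)$ I use the idempotent/clopen dictionary: if $f=f^2g$, then $e=fg$ is idempotent, hence the characteristic function of a clopen set, forcing $Z_c(f)$ clopen; conversely, if $Z_c(f)$ is clopen, the function equal to $1/f$ off $Z_c(f)$ and $0$ on $Z_c(f)$ lies in $C_c(X,F)$ (continuous on two clopen pieces, countable range) and satisfies $f^2g=f$. For $(2)\Leftrightarrow(9)$: Theorem 4.1 gives $(9)\Rightarrow(2)$; for the converse, given a $G_\delta$ set $G=\bigcap_n U_n$ and $p\in G$, zero-dimensionality of $X$ produces clopen $V_n\ni p$ with $V_n\subset U_n$, and by Theorem 4.4 the set $\bigcap_n V_n$ lies in $Z_c(X,F)$, hence is open by $(2)$, supplying an open neighbourhood of $p$ inside $G$. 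For $(2)\Leftrightarrow(3)$: the forward direction takes $f\in Z_{F,C}^{-1}[Z_{F,C}[I]]$ together with $g\in I$ satisfying $Z_c(f)=Z_c(g)$ (now clopen), and writes $f=hg$ with $h=f/g$ off $Z_c(g)$ and $h=0$ on it; the converse applies $(3)$ to $\langle f^2\rangle$, whose associated zero set is $Z_c(f)$, to conclude $f\in\langle f^2\rangle$ and hence $(1)$.

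\textbf{Structure-space cluster and main obstacle.} The step $(2)\Rightarrow(7)$ is where I expect the main obstacle to sit. Given $f\in M^{p,F}_c$, condition $(2)$ renders $Z_c(f)$ clopen in $X$; to place $f$ in $O^{p,F}_c$ one must know that the $\beta_0X$-closure of a clopen subset of $X$ is clopen in $\beta_0X$, so that $cl_{\beta_0X}Z_c(f)$ is automatically a neighbourhood of $p$. I would establish this by lifting the characteristic function of a clopen set of $X$ to a continuous map $X\to\{0,1\}\subset F$ and invoking the $C$-extension property of $\beta_0X$ recorded in Theorem 2.15, together with the identification $\mathcal{M}_c(X,F)=\beta_0X$ from Theorem 2.18. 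Restricting $p$ to $X$ yields $(7)\Rightarrow(6)$, since $M^{p,F}_c=M^c_{p,F}$ and $O^{p,F}_c=O^c_{p,F}$ for $p\in X$. Finally $(6)\Rightarrow(2)$ is direct: for any $p\in Z_c(f)$ we have $f\in M^c_{p,F}=O^c_{p,F}$, so $Z_c(f)$ is a neighbourhood of $p$ in $X$; since $p\in Z_c(f)$ was arbitrary, $Z_c(f)$ is open.
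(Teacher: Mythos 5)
Your proposal is correct, and it takes a genuinely different route from the paper. The paper dispatches the equivalence of statements $(1)$--$(8)$ with a citation (``almost repetition of the arguments in $4J$ of Gillman--Jerison and Theorem $5.8$ of Ghadermazi--Karamzadeh--Namdari'') and only writes out $(2)\Leftrightarrow(9)$ and $(8)\Leftrightarrow(10)$; you instead give a self-contained hub-and-spoke decomposition into three clusters, each closed against $(1)$, supplying the actual arguments (the idempotent $e=(f^2+g^2)h$ with the pointwise check $ef=f$, the reduced-plus-Krull-dimension-zero characterization for $(5)$, the idempotent/clopen dictionary for $(2)$, and the extension of characteristic functions of clopen sets to $\beta_0X$ for $(7)$ — the last being the one step that genuinely needs the Banaschewski machinery rather than a verbatim transcription of $4J$, and you identify and handle it correctly). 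Where you overlap with the paper's explicit part, you agree on $(9)\Rightarrow(2)$ via Theorem $4.1$ and on $(8)\Leftrightarrow(10)$ via Theorem $4.9$, but your $(2)\Rightarrow(9)$ is slightly more economical: you use zero-dimensionality plus Theorem $4.4$ directly, whereas the paper routes through Theorem $4.6$ applied to singleton compact sets (which in turn rests on Theorems $4.2$ and $4.5$). The only cosmetic points worth attending to are that $\langle f^2\rangle$ in your $(3)\Rightarrow(1)$ step should be taken proper (i.e., $f$ not a unit, the unit case being trivial) since the paper's convention is that ideals are proper, and that continuity of $1/f$ off a clopen zero set uses that inversion is continuous for the order topology on $F$ — both trivial to patch. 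What your approach buys is a proof readable without chasing the external references; what the paper's buys is brevity.
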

\begin{proof}
	Equivalence of the first eight statements can be proved by making an almost repetition of the arguments to prove the equivalence of the analogous statements in $4J$, \cite{GJ} [Also see the Theorem $5.8$ in \cite{RS2013}]. We prove the equivalence of the statements $(2), (9), (10)$.\\
	$(9)\implies(2)$ is immediate because of Theorem $4.1$.\\
	$(2)\implies(9)$ : Let $(2)$ be true and $G$ be a non-empty $G_\delta-$set in $X$. Then by Theorem $4.6$, for each point $x\in G$, there exists a zero set $Z_x\in Z_c(X,F)$ such that $x\in Z_x\subset G$. Since $Z_x$ is open in $X$ by $(2)$, it follows that $x$ is an interior point of $G$. In other words, $G$ is open in $X$.\\
	Equivalence of $(8)$ and $(10)$ follows from Theorem $4.9$.
\end{proof}
\begin{remark}
	On choosing $F=\mathbb{Q}$ in Theorem $4.10$, we get that a zero dimensional space $X$ is a $P-$space if and only if $C(X,\mathbb{Q})$ is a Von Neumann regular ring, i.e., $X$ is a $P_{\mathbb{Q}}-$space. Thus each $P_{\mathbb{Q}}-$space is a $P-$space. But we note that, though the cofinality character of $\mathbb{Q}$ is $\omega_0$, it is not Cauchy complete. This improves the conclusion of the Theorem $3.5$ in \cite{SABM2004}, which says that if $F$ is a Cauchy complete totally ordered field with cofinality character $\omega_0$, then every $P_F-$space is a $P-$space.
\end{remark}
Open question : If $p\in\beta_0X$, then does the set of prime ideals in $C_c(X,F)$ that lie between $O^{p,F}_c$ and $M^{p,F}_c$ make a chain?
\bibliographystyle{plain}

\begin{thebibliography}{6}
	\bibitem{IJMS2004}
	S.K. Acharyya, K.C. Chattopadhyaya and P.P. Ghosh : \emph{Constructing Banaschewski Compactification Without Dedekind Completeness Axiom}, Int. J. of Mathematics and Mathematical Sciences, 69 (2004), 3799--3816.
	
	\smallskip
	\bibitem{AGT2004}
	S.K. Acharyya, K.C. Chattopadhyaya and P.P. Ghosh : \emph{Continuous functions with compact support}, Appl. Gen. Top., 5(1) (2004), 103--113.
	
	\smallskip
	\bibitem{SABM2004}
	S.K. Acharyya, K.C. Chattopadhyaya and P.P. Ghosh : \emph{A Generic method to construct $P$-spaces through ordered field}, Southeast Asian Bulletin of Mathematics, 28 (2004), 783--790.
	
	\smallskip
	\bibitem{TP2016}
	S.K. Acharyya and P. Rooj : \emph{Structure spaces of intermediate rings of ordered field valued continuous functions}, Topology Proceedings, Vol 47 (2016), 163--176. 
	
	\smallskip
	\bibitem{AGT2015}
	S.K. Acharyya, K.C. Chattopadhyaya and P. Rooj : \emph{A Generalized version of the rings $C_K(X)$ and $C_{\infty}(X)-$ an enquery about when they become Noetheri}, Appl. Gen. Topology, 16(1) (2015), 81--87.
	
	\smallskip
	\bibitem{QM2020}
	S.K. Acharyya, R. Bharati and A. Deb Ray : \emph{Rings and subrings of Continuous functions with Countable range}, Quaestiones Mathematicae, http://doi.org/10.2989/16073606.2020.1752322.
	
	
	\smallskip
	\bibitem{RM2018}
	F. Azarpanah, O.A.S. Karamzadeh, Z. Keshtkar and A.R. Olfati : \emph{On Maximal Ideals of $C_c(X)$ and the uniformity of its localizations}. Rocky Mountain J. Math., Vol. 4 No. 2, (2018), 345--384.
	
	\smallskip
	
	\bibitem{CA2000}
	F. Azarpanah, O.A.S. Karamzadeh and R.A. Aliabad : \emph{On ideals consisting entirely of zero-divisors}. Comm. Algebra, 28 (2000), 1061--1073.
	
	
	\bibitem{TAMS1975}
	G. Bachman, E. Beckenstein, L. Narici and S. Warner : \emph{Rings of continuous functions with values in a topological field}. Trans. Amer. Math. Soc., 204 (1975), 91--112.
	
	\smallskip
	\bibitem{GT}
	R. Engelking : \emph{General Topology}.  Heldermann Verlag, Berlin, 1989.
	
	\smallskip
	\bibitem{RS2013}
	M. Ghadermazi, O.A.S. Karamzadeh and  M. Namdari : \emph{On the Functionally Countable Subalgebra of $C(X)$}. Rend. Sem. Mat. Univ. Padova, 129 (2013), 47-69.
	
	\smallskip
	\bibitem{GJ} L. Gillman and M. Jerison : \emph{Rings of Continuous Functions}. New York, Van Nostrand Reinhold Co., 1960.
	
	\smallskip
	\bibitem{AMS1971}
	G.D. Marco and A. Orsatti : \emph{Commutative Rings in which every Prime Ideal is contained in a Unique Maximal Ideal}. Proc. Amer. Math. Soc., 30(3) (1971), 459--466. 
	
	
\end{thebibliography}

}
\end{document}